\def\GM{\overline{Q}}
\newcommand{\K}{\mathbb{K}}
\newcommand{\R}{\mathbb{R}}
\newcommand{\cL}{\mathcal{L}}
\newcommand{\C}{\mathbb{C}}
\newcommand{\Q}{\overline{Q}}
\newcommand{\cA}{\mathscr{A}}
\newcommand{\sK}{\mathscr{K}}
\newcommand{\fg}{\frak{g}}
\newcommand{\ft}{\frak{t}}
\newcommand{\cV}{\mathcal{V}}
\newcommand{\cY}{\mathcal{Y}}
\newcommand{\cW}{\mathcal{W}}
\newcommand{\cN}{\mathcal{N}}
\newcommand{\F}{\mathcal{F}}
\newcommand{\cT}{\mathcal{T}}
\newcommand{\cO}{\mathcal{O}}
\newcommand{\sS}{\mathscr{S}}
\newtheorem{cor}{Corollary}
\newtheorem{lem}{Lemma}
\newtheorem{dfn}{Definition}
\newtheorem{thm}{Theorem}
\newtheorem{prop}{Proposition}
\title{Maximum Likelihood, permutohedra and Associativity Equations}
\author{No\'emie C. Combe}
\address{University of Warsaw\\ Ulica Banacha 2, 02-097 Warsaw}
\email{n.combe@uw.edu.pl}
\date{December 2024\thanks{{\bf Acknowledgments:} I acknowledge support from the project No. 2022/47/P/ST1/01177 co-founded by the National Science Centre  and the European Union's Horizon 2020 research and innovation program, under the Marie Sklodowska Curie grant agreement No. 945339 \includegraphics[width=1cm, height=0.5cm]{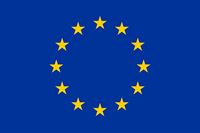}. I thank Jaros\l{}aw Wi\'sniewski for introducing me to BB cells, to his paper \cite{MMW} and for attracting my attention on complete collineations/quadrics. I would like to thank also Bernd Sturmfels and  Mateusz Micha\l{}ek for stimulating discussions on algebraic statistics at the Max Planck Institute for Mathematics in the Sciences; G\'erard Letac for enlightening discussions on Wishart laws; Jacques Faraut for insightful discussions on symmetric convex cones; Xavier Pennec for discussions during the conference GSI 2023 on \cite{P}; Anton Alexeev for Lie algebra insights; Gabe Khan for highlinghting recent results on connections between mirror symmetry and information geometry \cite{KZ}. Many thanks also to Igor Podolak and to the entire GMUM group of Cracow for welcoming warmly my topic. I thank also the group of the Ecole des Mines (Paris) and Rita Fioresi for organising and inviting me to the  European CaLISTA workshop MaxPlus Algebra, Tropical Geometry  and Mathematical Morphology in Deep Learning, where I could present related results. I also thank very much Frederic Barbaresco for his support and interest in my series of works. }}
\begin{document}
\begin{abstract}
We consider the cone of concentration matrices related to linear concentration models and Wishart laws. We prove that this cone is a Monge--Amp\`ere domain and that the log-likelihood function generates its potential function at the identity. The tangent sheaf carries the structure of a pre-Lie algebra. We also show that the moduli space of diagonal matrices parameterizing the polyhedral spectrahedron satisfies the Associativity Equations, a notion central in mirror symmetry, and that its compactification is a toric variety associated to a permutohedron, reminiscent to Losev--Manin spaces. Finally we introduce Frobenius residuals: these are connected components of the compactified Frobenius manifold of diagonal matrices, generated by the Bia\l{}ynicki--Birula cells. We prove that the Maximum Likelihood degree is indexed by components lying on those Frobenius residuals.
\end{abstract}

\maketitle

%\section{}
%\subsection{}
%\subsubsection{}

\section{Introduction}
\subsection{Setting: a semi-algebraic cone of matrices}
 Consider $\sS^n$ the vector space of complex symmetric matrices $n\times n$ with coefficients in $\K$, where $\K$ is real or complex. This is a $n+1\choose 2$-dimensional space. This space can be identified with the space of quadratic forms with coefficients in $\K$ and in $n$ variables $x^1,\cdots,x^n$.

\, 

A symmetric matrix is called semi positive definite if the corresponding quadratic form is non-negative i.e. $x^TAx\geq 0$, $A\in\sS^n$ and $x=(x^1,\cdots,x^n)$. The space of $n\times n$ semi positive definite symmetric matrices is referred as $\sS^n_{\geq 0}$ and the interior of positive definite matrices is denoted $\sS^n_{>0}$. The space $\sS^n_{\geq 0}$ is a cone and forms a semi-algebraic set. By \cite{L} the semi-algebraicity of the set  confers to our cone an important simplicial set decompositions. 

\, 

Regarding the case $\mathbb{K}=\R$, our results inscribe themselves in the vein of \cite{L}. We relate those semi-algebraic sets with Monge--Amp\`ere domains (Theorem \ref{T:2}) and our developments allow to show that this semi-algebraic set has on the space of diagonal matrices relations to the Associativity Equations, which are central objects in the mathematical mirror symmetry (Theorem \ref{P:AE}). For a reference on the Associativity Equations, Frobenius manifolds and mirror symmetry we refer to \cite{Man99,Man05,Man98}. 
Our results connect also to \cite{KZ} which relate mirror symmetry and information geometry. 
Interesting applications of our work can be obtained in semi definite programming \cite{RG,MMSV}, machine learning \cite{CCN1} and image processing \cite{P}. 

\, 

By taking the compactification (i.e. blowing-up the singular loci and passing to $\C$), the Bia\l{}ynicki-Birula affine cell decomposition (in short: BB cells \cite{BB}), allow us to show that the compactification of the locus where the Associativity Equations hold (called Frobenius manifolds) form a toric variety, carrying a permutohedron structure. The elements in the compactification of the Frobenius manifold are called {\it Frobenius residuals}, because they {\it emanate} from the Frobenius manifold. We can say that the Frobenius residuals are paved by the BB cells. Finally, we remark that this allows to establish an analogy with Losev--Manin spaces which are toric varieties carrying the permutohedron structure in their closure. 

\, 

 The real cone comes equipped with a self-duality property under a symmetric bilinear form: the trace $(A,B)\mapsto Tr(AB)$. If the cone is defined over $\C$, it is enough to take the real part of the trace. 

 \,

 The following definition is naturally defined for $\R,$ and $\C$ but can also hold for quaternions and for octonions, under the condition that $n=3$. 
 Let $\Omega=\sS^n_{> 0}$. An open dual cone $\Omega^*$ of an open convex cone is defined by $\Omega^*=\{y\in \sS^n\, |\, \langle x,y \rangle>0,\, \forall\, x\in \overline{\Omega}\setminus 0 \}$. A homogeneous convex cone $\Omega$ is symmetric if $\Omega$ is self-dual i.e. $\Omega^*=\Omega$. In other words, the cone can be expressed as a quotient of Lie groups. Suppose $\Omega=\sS^n_{>0}$. Assume that
\begin{itemize}
   \item[---]   $G(\Omega)$ is the group of all automorphisms;
   \item[---]   $G_e=K(\Omega)$ is the stability subgroup for some point $e\in \Omega$;
   \item[---]   $T(\Omega)$ is a maximal connected triangular subgroup of $G(\Omega).$ 
\end{itemize}

\smallskip 

We have: \[G(\Omega)=K(\Omega)\cdot T(\Omega),\] where $K(\Omega) \cap T(\Omega) = e$ and the group  $T$ acts simply transitively. Cartan's decomposition for the Lie algebra shows us that $\fg=\frak{k} \oplus\ft,$ 

where:

\begin{itemize}
   \item[---]   $\frak{t}$ can be identified with the tangent space of $\Omega$ at $e$. 

   \item[---]   $\frak{k}$ is the Lie algebra associated to $K(\Omega)$
\end{itemize}
 and
\[[\ft,\ft]\subset \frak{k},\]
\[[\frak{k},\ft]\subset \ft.\]

$\sS^n_{>0}$ is a non-compact symmetric space (which means that the sectional curvature is non-positive) and the $\sS^n_{\geq 0}$ over $\R$ can be identified with $Gl_n(\R)/O_n(\R)$, (respectively for $\C$ it is identified with $Gl_n(\C)/U_n$). See \cite{FK} for a more detailed exposition. 

\, 

 In Theorem~\ref{T:1}, we prove that the real (resp. complex) cone $\sS^n_{>0}$ is an affine domain equipped with a metric of Hessian (resp.K\"ahler) type and that its tangent sheaf has a pre-Lie algebra structure (see Lemma \ref{l:Hess} and Lemma~\ref{L:preLie}). Note that this result holds also for quaternions and octonions ($n=3$). However, we choose not to develop this aspect, due to the fact that is not useful for the current paper. 
 
 \, 
 
 We also demonstrate in Theorem~\ref{T:2} that the cone $\sS^n_{>0}$ is an elliptic Monge--Amp\`ere domain. Here again, this result holds also for real, complex, quaternions and octonions ($n=3$). Such a statement naturally implies that the cone is an affine domain equipped with a metric of Hessian/K\"ahler type.  Implications on this statement are that the cone  $\sS^n_{>0}$ has important optimal transport properties.

\subsubsection{Cone of concentration matrices and sufficient statistics} The part below relies on objects introduced in \cite{AW,SU,MMW}. This paper is a natural continuation of the investigations \cite{CM20,CMM22A,CMM22B} as well as \cite{C23,C24a,C24b,C24c,C24d,CCN1,CCN2} and previously \cite{Ca1,Ca2,Ch1,Ch2}.

Consider $\cL$ an affine linear space in the space of symmetric matrices  $\sS^n$. For technical reasons, one considers in fact its image in the complex projectivization of the space of symmetric matrices. By  $\cL^{-1}$ we denote the projective variety which is parametrised by inverses of all invertible matrices in $\cL$ i.e. \[\cL^{-1}:=\{S\in\sS^n_{>0}\, |\, S^{-1}\in\cL\}.\]

The affine linear space $\cL(x)$ is parametrized by the following linear combination: 
$$ L_0 +x_1 L_1+\cdots +x_dL_d,$$ where $L_i$ are symmetric matrices and $x=(x_1,\cdots,x_d)$ are real parameters. By taking the following intersection $$\mathscr{K}_{\cL}=\cL\cap\sS^n_{>0},$$ we obtain the  {\it cone of concentration matrices}. Given a basis generated by $L_i$ a basic statistical problem is to estimate the parameters $x_i$ when $m$ observations are drawn from a multivariate normal distribution whose covariance matrix $S$ lies in the model $ \cL^{-1}$. The dual cone to $\mathscr{K}_{\cL}$ forms the cone of sufficient statistics. The topological closures of $\mathscr{K}_{\cL}$  forms a closed convex cone:\[ \overline{\mathscr{K}}_{\cL}=\cL\cap\sS_{\geq0}^n.\]
We denote the boundary as follows \[\partial \sK=\overline{\sK}_{\cL}\, -\, \sK_{\cL}.\]

\, 

It is easy to construct $\cL^{-1}$ as well as  the polar space $\cL^{\perp}$. Both varieties $\cL^{-1}$ and the polar space $\cL^{\perp}$ of $\cL$  are varieties given by homogeneous polynomials. The polar $\cL^{\perp}$ is given by $$\cL^{\perp}:=\{S\in \sS^n_{>0},|\, tr(KS)=0\, \text{for\, all}\,  K\in \cL \}$$
The topological closures of both cones forms a closed convex cone. 

We prove interesting relations between the log-likelihood function and the characteristic function, which is used to define the Monge--Amp\`ere structure of the cone. 
This is stated in Lemma~\ref{L:log} we show that the log-likelihood function generates the potential function at the identity $Id$ of the cone. 

\subsubsection{Spectrahedron}
A spectrahedron is a closed convex set obtained from the intersection of an affine linear space with convex cone $\sS^n_{\geq 0}$. This is of the following type:
\[\{x\in \R^m \,| \, Q(x)\in \sS^n_{\geq 0}\}\]
where $Q(x)=Q_0+\sum_{i=1}^m x_i Q_i$ and $Q_i\in \sS^n,$ $\forall i=0,\cdots, m$.

We are interested in the special case where the spectrahedron is a polyhedron i.e. the intersection of the non-negative orthant with an affine-linear space. This is a spectrahedron parametrized by diagonal matrices: a diagonal matrix is positive semidefinite exactly when the diagonal entries are non-negative.
\begin{dfn}
A spectrahedron parametrized by diagonal matrices is called a diagospectrahedron.
\end{dfn}

The name comes from the fact that the spectrahedron is polyhedral and is parametrized by the diagonal matrices. 

\,

The notion of spectrahedron plays and important role in optimization and corresponds in particular to feasible regions of semidefinite programming.  Note that the parameter space of all $m$-dimensional linear spaces of symmetric matrices $\cL$ is a Grassmannian manifold $Gr(m,\sS^n)$.

\, 

Proposition \ref{P:tg} allows to explicit construct the space parametrizing the diagospectrahedron. This is a totally geodesic manifold of type $\exp{\frak{c}}$ where $\frak{c}$ is a Lie triple system of the shape $\frak{c}=c\cdot Id\oplus\frak{a}$, where $\frak{a}$ is a Cartan subalgebra of $\frak{sl}_n$.

\,
\subsection{Associativity Equations, permutohedra and BB cells}
Our results come also as a natural  continuation and development of works such as \cite{To04,Wil} and \cite{LoMa}. In Theorem~\ref{P:AE} we connect ``optimized" objects with structures coming from 2D topological Quantum Field Theory and the mathematical mirror symmetry. The space of diagonal matrices contains interesting properties related to 2D topological quantum field theory, as it satisfies the Associativity Equations definition, suggesting further important questions relating mirror symmetry, quantum field theory with feasible regions. 

\,

The geometrization of Associativity Equations (also known as Witten--Dijkgraaf--Verlinde--Verlinde equation) are called Frobenius manifolds. The associativity Equations arise simply from the associativity property of the algebra defined on the tangent sheaf, to a given domain/manifold.  

\,

The Associativity Equations are partial nonlinear order 3 equations which have the following shape:
 \begin{equation} \forall a,b,c,d: \quad 
\sum_{e,f}\partial_a\partial_b\partial_e\Phi g^{ef}\partial_f \partial_c\partial_d\Phi= \sum_{e,f} \partial_f\partial_c\partial_b\Phi g^{ef}\partial_e\partial_a\partial_d\Phi. 
\end{equation}
where $\Phi$ is a real potential function and $ \Phi_{ijk}=\frac{\partial^3\Phi}{\partial{x}^{i}{x}^{j}\partial {x}^k}$; $g$ is a non-degenerate symmetric bilinear form which is a Hessian metric. For a complex  version, we refer to \cite{C24b}. In the complex version, the metric is necessarily K\"ahler.

\, 

Formal solutions to the Associativity Equations (i.e. formal Frobenius manifolds) include cyclic algebras over the homology operad $H_*(\overline{M}_{0,n+1})$ of the moduli spaces of $n$–pointed stable curves of genus zero. 
\,

Working with the projectivization of $\sS^n_{\geq0}$ and of the subspace of diagonal matrices in $\sS^n_{\geq 0}$ leads us to interesting remarks related to Losev--Manin spaces. The compactified space of diagonal matrices forms a toric variety associated to a permutahedron, which can be put in parallel with the construction of Losev--Manin. 

\,

Indeed, the Losev--Manin spaces $\overline{LM}_n$ parametrize strings of $\mathbb{P}^1$'s and all marked points with exception of two are allowed to coincide. Their important similarities with the previous object are due to the fact that the Losev--Manin space is a toric variety associated to the fan formed by Weyl chambers of the root system of type $A_{n}$. The Weyl group associated to $A_{n}$ is isomorphic to $\mathbb{S}_{n+1}$, as previously.  The spaces $\overline{LM}_n$ are toric manifolds associated to a permutohedron. 

\,

The Losev--Manin spaces arise as solutions to commutativity equations, formed form pencils of formal flat connections which can be obtained from the homology of $\overline{LM}_n$  of pointed stable curves of genus zero. 
\,

In the last section, we consider the compactification of the cone. This can be done via the Bia\l{}ynicki--Birula (BB) cell decomposition. In particular, we introduce the notion of Frobenius residuals in definition \ref{D:FR} and show that the Frobenius residuals for the space of diagonal matrices is associated with a permutohedron. 
This allows to show that the ML degree is indexed by Frobenius residuals, which lie in the compactification of the cone, in Theorem \ref{T:3}. 

\,

\section{Geometry of convex cones, spectrahedra and their properties}
For simplicity, the statements provided below are presented over the field $\R$. A generalization to $\C$ is straightforward. 
\subsection{Affine and Hessian structures}
We show the following fact:
\begin{thm}\label{T:1}
The open cone $\sS^n_{>0}$ is an affine domain endowed with a Hessian structure and
the tangent sheaf to $\sS^n_{>0}$ carries the structure of a pre-Lie algebra. 
\end{thm}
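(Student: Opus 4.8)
The plan is to establish three things in sequence: an affine structure on $\sS^n_{>0}$, a Hessian metric compatible with it, and a pre-Lie product on the tangent sheaf. For the affine structure, I would observe that $\sS^n_{>0}$ is an open convex subset of the affine space $\sS^n$, so the single global chart given by the inclusion $\sS^n_{>0}\hookrightarrow\sS^n\cong\R^{\binom{n+1}{2}}$ provides an atlas whose (trivial) transition map is affine; equivalently, the flat connection $\nabla$ on $\sS^n$ restricts to a torsion-free flat connection on $\sS^n_{>0}$ whose geodesics are straight line segments. This is the standard affine structure on a convex domain and requires essentially no work beyond noting convexity, which follows from the semi-algebraic/convex-cone discussion already in the paper.

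Next, for the Hessian structure I would exhibit a convex potential $\varphi$ whose Euclidean Hessian, read in the global affine coordinates, is positive definite on all of $\sS^n_{>0}$; the natural choice is $\varphi(X)=-\log\det X$, the logarithmic barrier of the cone. One checks that $D\varphi(X)\cdot H=-\operatorname{tr}(X^{-1}H)$ and $D^2\varphi(X)(H,H)=\operatorname{tr}(X^{-1}HX^{-1}H)=\operatorname{tr}((X^{-1/2}HX^{-1/2})^2)\ge 0$, with equality only when $H=0$, since $X^{-1/2}HX^{-1/2}$ is symmetric. Hence $g=\nabla d\varphi$ is a well-defined Riemannian metric of Hessian type with respect to the flat connection $\nabla$, which is exactly the definition of a Hessian structure on the affine manifold $(\sS^n_{>0},\nabla)$. (I would remark this is the canonical $Gl_n$-invariant affine/Hessian structure, consistent with the symmetric-space description $Gl_n(\R)/O_n(\R)$ recalled above, and that $\varphi$ is, up to constants, the characteristic function's logarithm invoked in Lemma~\ref{L:log}.)

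For the pre-Lie structure on the tangent sheaf, I would use the general principle that a Hessian manifold carries a canonical torsion-free flat connection $\nabla$ together with its dual connection $\nabla^\ast$ (dual with respect to $g$), and that the difference tensor, or more directly the Amari--Chentsov-type construction, yields a product $a\cdot b=\nabla^{\ast}_a b-\nabla_a b$ (or one works with the Koszul form $D_aD_b\varphi$) which is left-symmetric: the associator $(a\cdot b)\cdot c-a\cdot(b\cdot c)$ is symmetric in $a,b$ precisely because $\nabla$ is flat and torsion-free, so its curvature and torsion obstructions vanish. Concretely in the global coordinates one sets $(a\cdot b)^k=\sum_{i,j}\Gamma^k_{ij}a^i b^j$ with $\Gamma^k_{ij}=\tfrac12\sum_l g^{kl}\varphi_{ijl}$, and left-symmetry $\Gamma^k_{ij}=\Gamma^k_{ji}$ together with the flatness identity gives the pre-Lie axiom; the commutator of this product recovers the Lie bracket of vector fields. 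I expect the main obstacle to be the bookkeeping in this last step: verifying that the left-symmetric identity $[a,b]\cdot c=a\cdot(b\cdot c)-b\cdot(a\cdot c)$ holds globally requires invoking the exact-form structure $g_{ij}=\varphi_{ij}$ and the symmetry of third derivatives $\varphi_{ijk}$, which is clean in principle but is where the Hessian hypothesis is genuinely used; everything prior is soft convexity and a one-line determinant computation. I would package the Hessian computation as Lemma~\ref{l:Hess} and the pre-Lie verification as Lemma~\ref{L:preLie}, then assemble them into Theorem~\ref{T:1}.
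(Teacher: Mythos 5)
Your treatment of the affine structure (the global chart coming from the inclusion $\sS^n_{>0}\hookrightarrow\sS^n$, with the flat connection of the ambient vector space restricting to the open convex cone) and of the Hessian structure is sound and close to the paper's. The paper works with the Koszul--Vinberg characteristic function $\chi(x)=\int_{\Omega^*}e^{-\langle x,a^*\rangle}\,da^*$ and sets $g=-\mathrm{Hess}(\ln\chi)$, merely asserting positive definiteness, whereas you take the barrier $-\log\det X$ and verify positivity explicitly via $\operatorname{tr}\bigl((X^{-1/2}HX^{-1/2})^2\bigr)>0$ for $H\neq 0$; since for this cone $\chi$ is a constant multiple of $(\det x)^{-(n+1)/2}$, the two potentials agree up to a positive factor, so your computation is a more self-contained justification of what the paper asserts.

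The pre-Lie step, however, contains a genuine gap. You take the product to be the difference tensor $a\cdot b=\nabla^{*}_a b-\nabla_a b$, equivalently $(a\cdot b)^k=\Gamma^k_{ij}a^ib^j$ with $\Gamma^k_{ij}=\tfrac12\sum_l g^{kl}\varphi_{ijl}$. Because $\varphi_{ijl}$ is totally symmetric, this product is \emph{commutative}, and for a commutative product left-symmetry of the associator is equivalent to associativity, i.e.\ to $\sum_{e,f} g^{ef}\varphi_{abe}\varphi_{fcd}=\sum_{e,f} g^{ef}\varphi_{ade}\varphi_{fcb}$ --- precisely the WDVV/Associativity Equations. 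These do \emph{not} follow from flatness and torsion-freeness of $\nabla$, and they fail on $\sS^n_{>0}$ in general: the paper's Theorem~\ref{P:AE} establishes them only on the locus of diagonal matrices, where the metric itself is flat. Your remark that the commutator of your product recovers the Lie bracket is also inconsistent with commutativity (the commutator of a commutative product vanishes). The repair is the construction the paper actually uses in Lemma~\ref{L:preLie}: take $X\circ Y:=\nabla_X Y$ with $\nabla$ the flat torsion-free connection itself. Then
$(X\circ Y)\circ Z-X\circ(Y\circ Z)-(Y\circ X)\circ Z+Y\circ(X\circ Z)=\nabla_{\nabla_XY-\nabla_YX}Z-(\nabla_X\nabla_Y-\nabla_Y\nabla_X)Z=\nabla_{[X,Y]}Z-\nabla_{[X,Y]}Z-R(X,Y)Z=0$,
which is exactly the left-symmetric (pre-Lie) identity, and here $X\circ Y-Y\circ X=[X,Y]$ does recover the bracket. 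With that substitution your assembly of the three pieces into Theorem~\ref{T:1} goes through.
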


We breakdown the proof of this statement into smaller statements in the paragraphs below. 

\, 
\subsubsection{Affine structures}
Recall that given a smooth $n$-dimensional manifold  $M$, an {\it affine structure} on $M$ is defined by a collection of coordinate charts $\{\, U_a,\phi_a\, \}$, where $\{\, U_a\, \}$ is an open cover of $M$ and $\phi_a:U_a\to \R^n$  is a local coordinate system, such that the coordinate change $\phi_b\circ \phi_a^{-1}$ is an affine transformation of $\phi_a(\, U_a\, \cap\, U_b\, )$ onto $\phi_b(\, U_a\, \cap\, U_b\, )$. 

\,

Manifolds with an affine structure are usually called affine manifolds or {\it affinely flat manifolds}. We highlight that an affine structure on $M$ induces a flat, torsionless affine connection $\nabla$ on $M$, and reciprocally. More precisely, assume $\cT_M$ is the tangent sheaf to $M$; $\Omega^1_M$ is the sheaf of 1-forms on $M$.  An affine flat structure on $M$ is given by any of the following equivalent data:

\begin{enumerate}
\item An atlas on $M$ whose transition functions are affine linear.

\item A torsionless flat connection $\nabla:\cT_M\to \Omega^1_M\otimes\cT_M$.

\item A local system $\cT_\Omega^f\subset \cT_\Omega$ of flat vector fields, which forms a sheaf of commutative Lie algebras of rank $n(=dim M)$ such that $\cT_M=\cO_M\otimes\cT_M^f$. 
\end{enumerate}

It is obvious to prove that the cone $\sS^n_{\geq 0}$ is equipped with an affine structure and forms  an example for \cite{K}.

\subsubsection{Pre-Lie structures of the tangent sheaf}
We prove that:
\begin{lem}\label{L:preLie}
The tangent sheaf to the open cone $\sS^n_{>0}$ carries the structure of a pre-Lie algebra. 
\end{lem}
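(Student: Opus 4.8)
The plan is to exhibit an explicit bilinear product on vector fields over $\sS^n_{>0}$ and verify that its associator is symmetric in the first two arguments, which is precisely the (left) pre-Lie axiom. Since $\sS^n_{>0}$ is an open subset of the linear space $\sS^n$, its tangent space at every point $X$ is canonically identified with $\sS^n$, so a vector field is just a map $X\mapsto A(X)\in\sS^n$; in particular the constant vector fields $A\in\sS^n$ form a global frame and it suffices to define the product on these and extend $\cO_{\sS^n_{>0}}$-bilinearly in the appropriate slot (the pre-Lie product is not $\cO$-bilinear in the left slot; it is a connection-like operation). The natural candidate, coming from the Jordan-algebra structure on $\sS^n$ and the self-dual cone picture recalled in the introduction ($\sS^n_{>0}\cong Gl_n(\R)/O_n(\R)$, $\ft\cong T_e\Omega$), is
\[
A\ast B \;=\; -\tfrac12\bigl(A X^{-1} B + B X^{-1} A\bigr)\qquad\text{at the point }X\in\sS^n_{>0},
\]
i.e. the symmetrized product twisted by the inverse of the base point. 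One recognizes $-\nabla_A B$ for the flat affine connection whose Christoffel symbols are read off from the Hessian potential $\Phi=-\log\det X$ supplied by Theorem~\ref{T:1} / Lemma~\ref{l:Hess}; equivalently $A\ast B$ is the image of $A\otimes B$ under the cubic tensor $\nabla^3\Phi$ raised by the metric $g$. I would set up the computation in these terms so that the Hessian metric $g(A,B)=\operatorname{Tr}(X^{-1}AX^{-1}B)$ and the product $\ast$ are visibly compatible.

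The key steps, in order: (1) Fix the identification $\cT_{\sS^n_{>0}}\cong\cO\otimes\sS^n$ and write the candidate product $\ast$ in closed form as above; check it is well defined (values stay in $\sS^n$) and $\cO$-linear in $B$. (2) Show $\ast$ is torsion-free in the sense that $A\ast B - B\ast A = [A,B]$, the ordinary Lie bracket of vector fields — here the symmetrization $\tfrac12(AX^{-1}B+BX^{-1}A)$ does the work and the derivative of $X^{-1}$ in the direction $A$, namely $-X^{-1}AX^{-1}$, produces exactly the bracket terms; this is the statement that the associated connection is the Levi-Civita/flat connection of $\Phi$. (3) Compute the associator $\mathrm{as}(A,B,C) = (A\ast B)\ast C - A\ast(B\ast C)$ and show it is symmetric under $A\leftrightarrow B$. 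By (2), symmetry of the associator in the first two slots is equivalent to flatness (vanishing curvature) of $\nabla$, and $\nabla$ is flat because in the linear coordinates on $\sS^n$ the potential $\Phi=-\log\det X$ has a globally defined Hessian, so the connection with Christoffel symbols $\Gamma^i_{jk}=\tfrac12 g^{il}\partial_j\partial_k\partial_l\Phi$ is flat by the cocycle/integrability argument standard for Hessian manifolds. Thus $(\cT_{\sS^n_{>0}},\ast)$ is left pre-Lie. (4) Conclude by noting the construction is local and $Gl_n$-equivariant, hence sheafifies.

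The main obstacle is step (3): verifying that the associator is symmetric in $A,B$ — or, in the differential-geometric packaging, that the connection built from $-\log\det X$ really is curvature-free. The cleanest route avoids brute-force matrix manipulation: invoke that a Hessian structure (Lemma~\ref{l:Hess}) on an affine manifold automatically yields, via $\Gamma = g^{-1}\cdot\nabla^3\Phi$, a flat torsion-free connection, and that on any flat affine manifold the operation $A\ast B := \nabla_A B$ is a left pre-Lie product (this is the classical correspondence between flat torsion-free connections and pre-Lie algebras, due to Koszul–Vinberg, matching the reference to \cite{K}). One then only needs to check that the affine structure from Section~2 and the potential $-\log\det$ are mutually compatible, i.e. that $\nabla$ is exactly the flat connection of the affine atlas, which is immediate since $\sS^n_{>0}\subset\sS^n$ carries the standard affine coordinates and $\partial_i\partial_j\partial_k\partial_l\Phi$ has no obstruction to defining a flat $\Gamma$. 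I would therefore present step (3) as an application of the Koszul–Vinberg principle rather than a computation, relegating the single identity $\partial_A(X^{-1}) = -X^{-1}AX^{-1}$ and its consequences to a short displayed line.
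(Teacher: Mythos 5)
Your underlying strategy --- a torsion-free \emph{and flat} connection $\nabla$ yields a left pre-Lie product $X\circ Y=\nabla_XY$, because the antisymmetric part of the associator in the first two slots is exactly the curvature $R(X,Y)Z$ --- is the same Koszul--Vinberg argument the paper uses, and that part is sound. The gap is in which connection you feed into it. You take $\nabla$ to have Christoffel symbols $\Gamma^i_{jk}=\tfrac12 g^{il}\partial_j\partial_k\partial_l\Phi$ for the potential $\Phi=-\log\det X$, i.e.\ the Levi-Civita connection of the Hessian metric $g(A,B)=\operatorname{Tr}(X^{-1}AX^{-1}B)$, with the explicit formula $A\ast B=-\tfrac12\bigl(AX^{-1}B+BX^{-1}A\bigr)$ on constant fields. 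That connection is \emph{not} flat: $(\sS^n_{>0},g)$ is a non-compact symmetric space whose curvature is $R(X,Y)Z=-[[X,Y],Z]$ (this is recorded in Proposition~\ref{T:Hel} of the paper) and is nonzero for $n\geq 2$. So your step (3) claim that this $\Gamma$ ``is flat by the cocycle/integrability argument standard for Hessian manifolds'' is false --- Hessian metrics are generically curved, which is precisely the subject of the cited paper of Totaro --- and the product $-\tfrac12(AX^{-1}B+BX^{-1}A)$ does \emph{not} satisfy the pre-Lie identity. The compatibility check you defer to the end (``$\nabla$ is exactly the flat connection of the affine atlas'') would likewise fail: the flat connection of the linear coordinates on $\sS^n$ has vanishing Christoffel symbols, yours does not.

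The repair is small and lands you on the paper's actual proof: use the flat affine connection $D$ of the ambient vector space $\sS^n$ (covariant derivative $=$ ordinary directional derivative of the $\sS^n$-valued function representing a vector field), which is available because $\sS^n_{>0}$ is open in $\sS^n$. Then $X\circ Y:=D_XY$ is torsion-free and curvature-free, and the pre-Lie identity is immediate from the two displayed identities $D_XY-D_YX=[X,Y]$ and $D_XD_Y-D_YD_X-D_{[X,Y]}=0$. The Hessian metric and the potential $-\log\det$ play no role in this lemma; they only enter later, in Lemma~\ref{l:Hess}.
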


The existence of an affine flat structures on the homogeneous cone $\sS^n_{>0}$ implies the existence of a {\it pre-Lie algebra} on its tangent sheaf.  A pre-Lie algebra is an algebra such that
the following relation is satisfied: \[a\circ (b\circ c)- b\circ (a\circ c) = (a\circ b)\circ c- (b\circ a)\circ c,\] for $a,b,c$ elements of the algebra.  

\begin{proof} 

Consider $X,Y$ two vector fields in $\cT_{\sS^n_{\geq 0}}$ (or in the corresponding Lie algebra $\frak{t}$). If $\sS^n_{\geq 0}$ is provided with an affine structure then:
\begin{equation}
\nabla_X(Y)-\nabla_Y(X)-[X,Y]=0,
\end{equation}
and
\[\nabla_X\nabla_Y-\nabla_Y\nabla_X-\nabla_{[X,Y]}=0,\]
where $X,Y\in \cT_{\sS^n_{\geq 0}}$.

A multiplication operation $``\circ"$  given by $X\circ Y=\nabla_X(Y)$ defines a commutative algebra $(\frak{t},\circ)$ satisfying the relation:

\[a\circ (b\circ c)- b\circ (a\circ c) = (a\circ b)\circ c- (b\circ a)\circ c,\]

for $a,b,c\in \frak{t}.$

This forms a pre-Lie algebra structure, on the set of connections. 
\end{proof}

A pre-Lie algebra structure is also known as a Lie-admissible algebra. Each Lie algebra with affine structure is derived from a Lie-admissible algebra. 

\subsubsection{Characteristic function for $\sS^n_{\geq 0}$} Consider the cone $\Omega=\sS^n_{>0}$ and let  ${\Omega^*}$ be the dual cone. Let us introduce the following characteristic function $\chi(x):\sS^n_{\geq 0}\to \R$ where:
\begin{itemize}
    \item $\chi(x)$ is real analytic and positive on $\sS^n_{>0}$;
     \item $\chi(x)$  is continuous on $\sS^n_{\geq 0}$; 
     \item $\chi(x)$ vanishes on the boundary $\partial \sS^n_{\geq 0}$;
     \item $\chi(\lambda x)=\lambda^n\chi(x)$ for $\lambda>0$, $x\in \sS^n_{> 0}$ and where $n$ is the dimension of $\sS^n_{> 0}$;
     \item the bilinear form on ${\Omega}$ is non singular for every $y\in\sS^n_{> 0}$.
         \end{itemize}
Given a point $c\in \sS^n_{>0}$ we have: 
     \[\langle u,v\rangle=-\partial_u\partial_v \ln(\chi(y))|_{y=c}\]

where $\partial_j$ is a shorthand notation for $\frac{\partial}{\partial x^j}$.

\, 

The characteristic function can be explicitly defined as follows. 
Let $\sS^n_{\geq 0} \subset \sS^n$ be a strictly convex symmetric cone. For any element $x\in \sS^n_{\geq 0}$, define the characteristic function:
\begin{equation}
\chi(x)=\int_{\Omega^*}\exp{\{-\langle x,a^*\rangle\}} da^*
\end{equation} 
where $da^*$ is a volume form invariant under translations in $\Omega^*$.

\begin{itemize}
\item This function tends to infinity on the boundary of the cone $\partial{\Omega}=\overline{\Omega}\, -\, {\Omega}$. 
\item By definition of $\chi$: \[\forall x\in \Omega,\, \forall h\in G(\Omega):\quad \chi(hx)=|\det h|^{-1}\chi(x).\]The differential form, $\alpha=d\chi/\chi$ is invariant under $G(\Omega)$.  

\end{itemize}

The construction of $\chi$ allows to define an important type of function in analysis: the {\it potential function}. By H\"older's inequality, we have that $$\Phi=\ln\chi$$ is strictly convex and properties of $\Phi=\ln\chi$ allow us to state that this is a potential function for the cone $\Omega$.

\subsubsection{Hessian structures}\label{S:HessianStr}
We have that $\sS^n_{>0}$ is equipped with an affine flat torsionless connection $\nabla$. There exists a class of Riemannian metrics compatible with this connection $\nabla$. A Riemannian metric $g$ on $\mathcal{M}$ is said to be a Hessian metric if $g$ is given by $g=\nabla^2 \Phi$, where $\Phi$ is a local smooth function. Such a pair $(\nabla ,g)$ is called a Hessian structure on $\Omega$. The triple given by $ (\Omega, \nabla, g)$ is called a Hessian manifold.

\,

Let $\sS^n_{>0}$ be a differentiable manifold with a locally flat linear connection $\nabla$. Let $p$ be a point of the differentiable manifold $\sS^n_{>0}$ and let $U$ be an open neighborhood of $p$.
Then, for any point $p$ in $U$, there exists a local coordinate system $(x^1,\cdots, x^n)$ called an affine local coordinate system, such that $ \nabla(dx^i)=0$. A Riemannian metric $g$ on the differential manifold $\sS^n_{>0}$ is said to be locally Hessian with respect to $\nabla$ if there exists for each point $p\in \Omega$, a real-valued function $\Phi$ of class $C^{\infty}$ on $U$
such that \[g= \frac{\partial^2 \Phi}{\partial x^i \partial x^j}dx^i dx^j,\] where $(x^1,\cdots, x^n)$ is an affine local coordinate system around $p$. Then, $(\nabla, g)$ is called a {\it locally Hessian} structure on $\Omega$.

Therefore, applying the knowledge above, we can conclude that:  
\begin{lem}\label{l:Hess}
The convex homogeneous cone $\sS^n_{>0}$ has a Hessian structure. 
\end{lem}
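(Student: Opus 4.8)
The plan is to produce an explicit Hessian pair $(\nabla,g)$ on $\sS^n_{>0}$ out of the global affine coordinates on the ambient space $\sS^n$ together with the potential $\Phi=\ln\chi$ already constructed above. First I would record the affine data: since $\sS^n_{>0}$ is an open convex subset of the $\binom{n+1}{2}$-dimensional vector space $\sS^n$, the matrix-entry functions $(x^{ij})_{i\le j}$ furnish a single global affine chart, and $\nabla$ is the flat torsionless connection whose Christoffel symbols vanish in these coordinates --- this is exactly the affine flat connection already noted to exist on the cone. In particular $\nabla(dx^{ij})=0$ holds globally, so $(x^{ij})$ is an affine local coordinate system in the sense of Subsection~\ref{S:HessianStr} at every point of $\sS^n_{>0}$.

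Next I would take $\Phi=\ln\chi$, where $\chi$ is the characteristic function of the symmetric cone $\Omega=\sS^n_{>0}$. Using self-duality $\Omega^{*}=\Omega$ together with the equivariance $\chi(hx)=|\det h|^{-1}\chi(x)$ for $h\in Gl_n$ acting by $x\mapsto hxh^{T}$ (or, equivalently, evaluating the defining integral directly), one obtains $\chi(x)=c\,(\det x)^{-(n+1)/2}$ for a positive constant $c$, so that $\Phi=\ln c-\tfrac{n+1}{2}\ln\det x$ up to an additive constant. I would then set
\[
g:=\nabla^{2}\Phi=\frac{\partial^{2}\Phi}{\partial x^{ij}\,\partial x^{kl}}\,dx^{ij}\,dx^{kl},
\]
which is manifestly of the form required for a locally Hessian metric, with the single global potential $\Phi$, and which is symmetric because mixed partials commute.

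It then remains to check that $g$ is genuinely Riemannian, i.e.\ positive definite at each $p\in\sS^n_{>0}$. This is precisely the strict convexity of $\Phi=\ln\chi$, established above via H\"older's inequality; concretely, differentiating $-\ln\det$ twice gives $(\nabla^{2}\Phi)_{p}(H,H)=\tfrac{n+1}{2}\,\mathrm{tr}\big((p^{-1}H)^{2}\big)$, and writing $p=q^{2}$ with $q$ positive definite shows $p^{-1}H$ is similar to the symmetric matrix $q^{-1}Hq^{-1}$, so $\mathrm{tr}\big((p^{-1}H)^{2}\big)=\mathrm{tr}\big((q^{-1}Hq^{-1})^{2}\big)>0$ for every nonzero $H\in\sS^n$. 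Hence $(\nabla,g)$ is a locally (indeed globally) Hessian structure and $(\sS^n_{>0},\nabla,g)$ a Hessian manifold, which is the assertion of the lemma.

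The one substantive point is this last positivity check: the argument must exclude degeneracy of the Hessian along boundary directions, which is why one invokes strict convexity of the logarithm of the characteristic function rather than mere convexity of $\chi$ itself; the explicit trace formula above makes the strict inequality transparent. Everything else is bookkeeping with the affine flat structure and the characteristic function already in place, so I do not expect any further obstacle.
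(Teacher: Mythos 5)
Your proof is correct and follows essentially the same route as the paper: take the global affine flat structure on the open cone, use the potential $\Phi=\ln\chi$ built from the characteristic function, and define $g$ as its Hessian in affine coordinates. The only difference is that you go further than the paper, which simply asserts positive definiteness of $g$: you compute $\chi(x)=c\,(\det x)^{-(n+1)/2}$ explicitly from the equivariance of $\chi$ under the $Gl_n$-action and verify strict positivity via $\mathrm{tr}\bigl((q^{-1}Hq^{-1})^{2}\bigr)>0$, which is a welcome addition rather than a deviation.
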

\begin{proof}
We use the characteristic function $\chi(x)$ defined above to define a Hessian structure on $\Omega$. The canonical Riemannian metric attached to the cone $\Omega$ is 
$g_V=g=-\mathrm {Hess}(\ln\chi(x))$. It is invariant under $G(\Omega).$ 

Let us continue the discussion in local coordinates.  Assume $(x^1,\cdots, x^n)$ forms an affine coordinate system on $\Omega$.  The convex homogeneous domain $\Omega$ admits an invariant volume element defined as $\Phi dx^1\wedge \cdots \wedge dx^n$. 

The canonical bilinear form is: 
\begin{equation}\label{E:Riem}
g={\color{red}}\sum\frac{\partial^2 \ln \chi}{\partial x^i\partial x^j}dx^idx^j,
\end{equation} 
where $\Phi=\ln\chi(x)$ is a potential function. The  canonical bilinear form $g$ is positively definite. This gives the Riemannian metric on $\Omega$ and defines the Hessian structure. It is interesting to notice that the metric degenerates on the boundary of the cone $\partial{\Omega}$, due to the fact that the characteristic function vanishes on $\partial\Omega$. 
\end{proof}

We can now prove the first Theorem ~\ref{T:1}. 
\begin{proof}
The affine structure follows obviously from the construction of the cone as well as from Lem.\ref{L:preLie} and Lem. \ref{l:Hess}. 
\end{proof}
\subsubsection{The cone $\sS^n_{>0}$ is a Monge--Amp\`ere domain}

If $D$ is a strictly convex bounded subset of $\mathbb{R}^n$ then for any nonnegative function $f$ on $D$ and continuous $\tilde{g}:\partial D \to \mathbb{R}^n$ there is a unique convex smooth function $\Phi\in C^{\infty}(D)$ such that 
\begin{equation}\label{E:EMA}
\det \mathrm{Hess}(\Phi)= f, 
\end{equation} is satisfied in $D$ and $\Phi=\tilde{g}$ on the boundary $\partial D$ (see \cite{RT} for further information). 

\,

The {\it geometrization of an elliptic Monge--Ampère equation} refers to the geometric data generated by $({D}, \Phi)$, where \begin{itemize}
    \item ${D}$ is a strictly convex domain 
    \item $\Phi$ a real convex smooth function (with arbitrary and smooth boundary values of $\Phi$)  
    \end{itemize}
    such that Eq.~\eqref{E:EMA} is satisfied.

\,

The following statement relates the cone $\sS^n_{>0}$ with elliptic Monge--Ampere equations. A domain on which the elliptic Monge--Amp\`ere equations are satisfied everywhere locally are called an elliptic Monge--Amp\`ere domain. This has for instance interesting applications from the side of optimization. 

\begin{thm}\label{T:2}
The convex homogeneous cone $\sS^n_{>0}$ is an elliptic Monge--Amp\`ere domain. 
\end{thm}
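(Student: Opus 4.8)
The plan is to show that the cone $\sS^n_{>0}$ satisfies the defining property of an elliptic Monge--Amp\`ere domain by exhibiting, on a neighborhood of every point, a strictly convex smooth potential whose Hessian determinant equals a prescribed nonnegative function. By Lemma~\ref{l:Hess} we already have a canonical potential $\Phi=\ln\chi(x)$ on $\sS^n_{>0}$, where $\chi$ is the characteristic function, and we know $g=\mathrm{Hess}(\Phi)$ is a positive-definite Riemannian (Hessian) metric. So the strict convexity of $\Phi$ and the positive-definiteness of $\mathrm{Hess}(\Phi)$ are already in hand; what remains is to organize this into the local solvability statement of Eq.~\eqref{E:EMA}.

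First I would set $f:=\det\mathrm{Hess}(\ln\chi)$, which by Lemma~\ref{l:Hess} is a strictly positive real-analytic function on $\sS^n_{>0}$. Then $\Phi=\ln\chi$ solves $\det\mathrm{Hess}(\Phi)=f$ identically on the cone. Next I would fix an arbitrary point $p\in\sS^n_{>0}$ and choose a small strictly convex bounded neighborhood $D\ni p$ with $\overline D\subset\sS^n_{>0}$ (possible since $\sS^n_{>0}$ is open and locally convex in the affine coordinates given by the matrix entries); restricting $\Phi$ to $D$ gives a smooth strictly convex function with smooth boundary values $\tilde g:=\Phi|_{\partial D}$. Invoking the Rochtchine--Trudinger-type existence and uniqueness statement quoted just above the theorem (the $\det\mathrm{Hess}(\Phi)=f$, $\Phi|_{\partial D}=\tilde g$ Dirichlet problem), the pair $(D,\Phi|_D)$ realizes the geometrization of an elliptic Monge--Amp\`ere equation. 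Since $p$ was arbitrary, the elliptic Monge--Amp\`ere equation holds everywhere locally on $\sS^n_{>0}$, which is exactly the definition of an elliptic Monge--Amp\`ere domain.

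I would then record the two structural consequences emphasized in the introduction: that $\sS^n_{>0}$ is thereby an affine domain with a Hessian metric (immediate from Lemma~\ref{l:Hess} and Theorem~\ref{T:1}), and, optionally, remark on the homogeneity $\chi(hx)=|\det h|^{-1}\chi(x)$ which shows the data $(D,\Phi)$ can be transported by $G(\Omega)$, so the local model is the same at every point. One should also note the degeneration of $g$ on $\partial\Omega$ means the construction is genuinely interior, consistent with $D\subset\sS^n_{>0}$.

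The main obstacle is making sure the logical direction matches the quoted theorem: the Rochtchine--Trudinger result produces a convex solution given $f$ and the boundary data, whereas here we are handed a candidate potential $\ln\chi$ and must check it is the relevant solution on small convex pieces; the point to be careful about is that $\ln\chi$ is strictly convex (true by H\"older, as noted) so that its graph is locally the unique convex solution and the ``geometrization'' data $(D,\Phi)$ is well defined, and that a strictly convex bounded $D$ with $\overline D\subset\sS^n_{>0}$ genuinely exists around each point. Neither is deep, so the proof is essentially an assembly of Lemma~\ref{l:Hess}, the strict convexity of $\ln\chi$, and the cited Dirichlet-problem theorem.
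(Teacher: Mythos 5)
Your proposal is correct under the paper's definition of an elliptic Monge--Amp\`ere domain, but it follows a genuinely different route from the paper's own proof. The paper never touches the characteristic function here: it identifies $\sS^n_{>0}$ with the cone of positive definite quadratic forms, invokes G\aa rding's theory of hyperbolic polynomials \cite{Ga} to attach to each homogeneous polynomial $P$ the operator $P(\mathrm{Hess}\,\Phi)$, specializes $P$ to the determinant, and uses the convexity of the sets $\{\det \geq c\}$ to equip the cone everywhere locally with the constant--right-hand-side operator $\det\mathrm{Hess}(\Phi)=c>0$. You instead take the canonical potential $\Phi=\ln\chi$ already produced in Lemma~\ref{l:Hess}, set $f:=\det\mathrm{Hess}(\ln\chi)>0$, and restrict to small strictly convex balls $D$ with $\overline{D}\subset\sS^n_{>0}$ to exhibit the geometrization data $(D,\Phi)$ directly. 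What your route buys is a single, explicit, globally defined solution of the equation that is moreover the same potential reused later in Lemma~\ref{L:log}, and you correctly observe that the Dirichlet existence theorem is then only needed for uniqueness, not existence; what the paper's route buys is the placement of the Monge--Amp\`ere operator inside the broader G\aa rding-hyperbolic family. Two small corrections: the Dirichlet-problem reference quoted before the theorem is Rauch--Taylor \cite{RT}, not ``Rochtchine--Trudinger''; and since the paper's metric is written once as $-\mathrm{Hess}(\ln\chi)$ and once as $+\mathrm{Hess}(\ln\chi)$, you should state explicitly that it is $\ln\chi$ itself (not its negative) that is convex, by H\"older, so that $\det\mathrm{Hess}(\ln\chi)>0$ as your argument requires.
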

\begin{proof}
The open cone can be identified with the space of positive definite quadratic forms in $n$ variables. Applying this, we can rely on the Gårding--hyperbolic polynomials defined on a vector space of quadratic forms on $\R^n$ (\cite{Ga}). 
For each homogeneous polynomial $P$ on this vector space one considers the associated (non-linear) partial differential operator defined by $$P (\mathrm{Hess} (\Phi)).$$ If $P$ is the determinant, the associated operator is the real Monge--Ampère operator.

\,

In fact, symmetric matrices with determinant greater or equal to $c>0$ form a convex set. To every such convex set corresponds an elliptic Monge--Amp\`ere operator, given by $\det \mathrm{Hess}(\Phi)=c>0$, where $\Phi$ is unique. 
So, the domain $\sS^n_{>0}$ comes equipped with an elliptic Monge--Amp\`ere operator everywhere locally and is an elliptic Monge--Amp\`ere domain. 
\end{proof}

\,
This allows to conclude as follows. 
\begin{cor} \label{C:EMA}
The interior of a spectrahedron is parametrized by an elliptic Monge--Amp\`ere domain. The interior of diagospectrahedron is parametrized by a flat elliptic Monge--Amp\`ere subdomain.
\end{cor}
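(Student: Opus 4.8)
The plan is to transport the Monge--Amp\`ere structure of $\sS^n_{>0}$ established in Theorem~\ref{T:2} to the interior of a spectrahedron by pulling it back along the affine parametrization, and then, in the diagonal case, to supplement this with a short curvature computation showing the pullback lies in a flat.

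First I would treat a general spectrahedron with defining map $Q(x)=Q_0+\sum_{i=1}^m x_iQ_i$; after discarding redundant parameters one may assume $Q_1,\dots,Q_m$ linearly independent, so $x\mapsto Q(x)$ is an injective affine embedding of the open convex set $\{x\in\R^m\mid Q(x)\in\sS^n_{>0}\}$ into the cone. Set $\widetilde\Phi:=\Phi\circ Q$, where $\Phi=\ln\chi$ is the potential from Section~\ref{S:HessianStr}. Since $\Phi$ is strictly convex and $Q$ is an injective affine map, $\widetilde\Phi$ is strictly convex; explicitly $\mathrm{Hess}(\widetilde\Phi)(x)=J^{T}\,\mathrm{Hess}(\Phi)(Q(x))\,J$ with $J$ the constant Jacobian of $Q$, which has full column rank, so $\mathrm{Hess}(\widetilde\Phi)$ is positive definite and $\det\mathrm{Hess}(\widetilde\Phi)=f$ for a smooth positive function $f$. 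Hence the elliptic Monge--Amp\`ere equation \eqref{E:EMA} holds everywhere locally on the slice; since the operative notion in Theorem~\ref{T:2} is precisely local solvability, no global boundedness is needed, and together with the flat torsionless connection pulled back from $\sS^n_{>0}$ this presents the interior of the spectrahedron as an elliptic Monge--Amp\`ere domain.

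Next I would specialize to a diagospectrahedron. By Proposition~\ref{P:tg} its parametrizing space is (an affine subdomain of) $\exp\frak{c}$ with $\frak{c}=c\cdot Id\oplus\frak{a}$ and $\frak{a}$ a Cartan subalgebra of $\frak{sl}_n$, i.e.\ a maximal flat totally geodesic submanifold of the symmetric space $\sS^n_{>0}$, namely the positive diagonal matrices; the previous paragraph applies verbatim to this affine slice. Using self-duality of the cone, a standard computation gives $\chi(x)=\mathrm{const}\cdot(\det x)^{-(n+1)/2}$, consistent with the homogeneity of $\chi$, so on the diagonal locus $x=\mathrm{diag}(d_1,\dots,d_n)$ one has $\Phi=\mathrm{const}-\tfrac{n+1}{2}\sum_i\ln d_i$, whence $\mathrm{Hess}(\Phi)=\tfrac{n+1}{2}\,\mathrm{diag}(d_1^{-2},\dots,d_n^{-2})$ is positive definite and $\det\mathrm{Hess}(\Phi)=(\tfrac{n+1}{2})^{n}\prod_i d_i^{-2}>0$: the elliptic Monge--Amp\`ere equation again holds. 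Meanwhile the induced Hessian metric $g=\tfrac{n+1}{2}\sum_i d_i^{-2}\,(dd_i)^{2}$ has vanishing curvature, being isometric to the Euclidean metric via $u_i=\ln d_i$; so the interior of the diagospectrahedron is a flat elliptic Monge--Amp\`ere subdomain of $\sS^n_{>0}$.

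The main obstacle I anticipate is the bookkeeping in the last step: the chart $u_i=\ln d_i$ making the Hessian metric manifestly Euclidean is not affine for the given flat connection, so ``flat'' here must be read as vanishing Riemannian curvature of the Hessian metric (equivalently, being an open piece of a symmetric-space flat) rather than as the automatic flatness of the affine structure, and one must check that these two descriptions coexist on the slice without the potential degenerating. A secondary, routine point is justifying the reduction to linearly independent $Q_i$, i.e.\ the injectivity of the parametrization, which is what upgrades convexity of the restricted potential to strict convexity.
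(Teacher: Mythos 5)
Your proposal is correct and follows essentially the same route as the paper: restrict the Monge--Amp\`ere structure of $\sS^n_{>0}$ (Theorem~\ref{T:2}) to the affine slice, then check flatness on the diagonal locus. The paper's own proof merely asserts these two steps, whereas you supply the details it leaves implicit --- the pullback Hessian $J^{T}\,\mathrm{Hess}(\Phi)\,J$ remaining positive definite, the explicit form $\chi=\mathrm{const}\cdot(\det x)^{-(n+1)/2}$, and the logarithmic change of variables exhibiting the vanishing of the curvature --- and your closing caveat distinguishing metric flatness from affine flatness is a genuine subtlety the paper glosses over.
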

\begin{proof}
The spectrahedron is an affine subspace of $\sS^n_{>0}$. 
By Theorem~\ref{T:2} the cone $\sS^n_{>0}$ is an elliptic Monge--Amp\`ere domain. Therefore, the spectrahedron is parametrized by a Monge--Amp\`ere domain. 
Suppose that we require the spectrahedron to be parametrized by the moduli space of diagonal matrices. 
Then, a computation shows that the sectional curvature of the space of diagonal matrices vanishes. 
Therefore, polyhedral spectrahedron is parametrized by a flat elliptic Monge--Amp\`ere subspace.
\end{proof}

This statement is interesting regarding Frobenius manifolds, a notion important to mirror symmetry. In particular, a Monge--Amp\`ere domain coincides with a preliminary structure: the notion of pre-Frobenius manifolds. The latter was introduced by Manin. A Monge--Amp\`ere manifold satisfies the axioms of a potential pre-Frobenius manifold (see \cite{C24a} for details).  

We use the following statement.
\begin{prop}\label{T:Hel}
Let $\Omega$ be a finite dimensional non-compact symmetric space (real or complex). 
%\begin{itemize}
%    \item 
The curvature tensor $R$ evaluated at $T_p\Omega$ is given by 
\[R(X,Y)Z=-[[X,Y],Z],\quad \text{for}\quad  X,Y,Z\, \in\,  T_p\Omega. \]
 
 \,
 
 Consider the submanifold $\mathscr{F}\subset \Omega$. Let $p\in \Omega$ be a point. Identifying the tangent space $T_p \Omega$ with $\ft$, let $\frak{c}\subseteq \ft$ be a Lie triple system contained in $\ft$. Put $\mathscr{F}:=\exp{\frak{c}}$. Then, $\mathscr{F}$ has a natural differentiable structure in which it is a  totally geodesic submanifold of $\Omega$ satisfying $T_p\mathscr{F}=\frak{c}$. On the other hand, if $\F$ is a totally geodesic submanifold of $\Omega$ then the subspace $\frak{c}=T_p\mathscr{F}$ of $\ft$ is a Lie triple system.  
%\end{itemize}
\end{prop}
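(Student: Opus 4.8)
The plan is to treat both assertions as the specialization to $\Omega=\sS^n_{>0}=G(\Omega)/K(\Omega)$ of the classical theorems of É.~Cartan and Helgason on the curvature and the totally geodesic submanifolds of an affine symmetric space (see also \cite{FK}), run against the Cartan decomposition $\fg=\frak{k}\oplus\ft$ fixed above. The only structural input used is the pair of inclusions $[\ft,\ft]\subseteq\frak{k}$ and $[\frak{k},\ft]\subseteq\ft$; in particular they guarantee that for $X,Y,Z\in\ft$ one has $[X,Y]\in\frak{k}$ and then $[[X,Y],Z]\in\ft$, so that the right-hand side $-[[X,Y],Z]$ of the curvature identity lands in $\ft\cong T_p\Omega$ and the formula even makes sense.

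For the curvature formula I would first equip $\Omega$ with its canonical torsion-free $G(\Omega)$-invariant connection $\nabla$; because of the geodesic symmetry $s_p$ this connection is complete and satisfies $\nabla R=0$, and for the invariant Hessian (resp. K\"ahler) metric of Lemma~\ref{l:Hess} it coincides with the Levi--Civita connection, so nothing is lost by working with it. The next step is to identify the geodesics issuing from $p$ as the orbits $\gamma_X(t)=\exp(tX)\cdot p$, $X\in\ft$, and to note that parallel transport along $\gamma_X$ is the differential of the transvection $g\mapsto\exp(tX)g$. Expressing a Jacobi field along $\gamma_X$ in the resulting parallel frame and differentiating twice turns the Jacobi equation into a constant-coefficient linear system whose operator is the restriction of $(\operatorname{ad}X)^2$ to $\ft$; polarizing this quadratic expression in $X$ and reorganizing the brackets via the Jacobi identity and the two inclusions above yields $R(X,Y)Z=-[[X,Y],Z]$ at $p$ (the sign being the one dictated by the chosen convention for $R$, and compatible with $\sS^n_{>0}$ being of non-compact type, i.e. non-positively curved).

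For the equivalence with Lie triple systems, the first observation is that, by the curvature formula, a subspace $\frak{c}\subseteq\ft$ satisfies $[[\frak{c},\frak{c}],\frak{c}]\subseteq\frak{c}$ if and only if it is curvature-invariant, i.e. $R(X,Y)Z\in\frak{c}$ whenever $X,Y,Z\in\frak{c}$. Granting this, for the forward direction I would set $\frak{h}:=[\frak{c},\frak{c}]\oplus\frak{c}$, check using the Lie triple identity and the Jacobi identity that $\frak{h}$ is a Lie subalgebra of $\fg$ with $[\frak{c},\frak{c}]\subseteq\frak{k}$, pass to the connected subgroup $H\subseteq G(\Omega)$ it generates, and verify that the orbit $\mathscr{F}:=H\cdot p$ coincides with $\exp(\frak{c})$ as a subset of $\Omega$, is a symmetric space for the induced metric, has $T_p\mathscr{F}=\frak{c}$, and is totally geodesic — the last point because its geodesics through $p$ are precisely the $\gamma_X$ with $X\in\frak{c}$, which are geodesics of $\Omega$, and $\nabla R=0$ propagates this along $\mathscr{F}$. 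For the concrete cone $\Omega=\sS^n_{>0}$ one can shorten this: the matrix exponential is a diffeomorphism from the space of symmetric matrices onto $\sS^n_{>0}$, so $\exp(\frak{c})$ is automatically an embedded submanifold with tangent space $\frak{c}$ at the identity, and only its total geodesy needs checking. Conversely, if $\F\subseteq\Omega$ is totally geodesic through $p$ and $\frak{c}:=T_p\F$, then geodesics of $\F$ are geodesics of $\Omega$ so $\exp(\frak{c})\subseteq\F$, and the vanishing of the second fundamental form fed into the Gauss equation gives $R^{\Omega}(X,Y)Z\in T_p\F$ for $X,Y,Z\in\frak{c}$; by the curvature formula this is exactly $[[X,Y],Z]\in\frak{c}$, so $\frak{c}$ is a Lie triple system.

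The step I expect to be the real obstacle is the forward direction of the second part in full generality: showing that $[\frak{c},\frak{c}]\oplus\frak{c}$ genuinely closes up to a subalgebra and that the orbit of the resulting subgroup is a well-behaved (immersed) submanifold which is totally geodesic — in other words, the passage from the purely infinitesimal condition (curvature-invariance of $\frak{c}$) to a global totally geodesic $\mathscr{F}$. For the cone $\sS^n_{>0}$ this difficulty is largely dissolved by the global diffeomorphism $\exp\colon\ft\xrightarrow{\ \sim\ }\sS^n_{>0}$, which hands us $\mathscr{F}=\exp(\frak{c})$ as an embedded submanifold for free; what remains is the statement that a geodesic of $\Omega$ tangent to $\mathscr{F}$ at one point stays inside $\mathscr{F}$, which follows from the curvature-invariance of $\frak{c}$ together with $\nabla R=0$ (equivalently, from the fact that $\exp(\frak{c})$ is stable under the geodesic symmetry $\exp(X)\mapsto\exp(-X)$ and under the transvections $\exp(tX)$ with $X\in\frak{c}$).
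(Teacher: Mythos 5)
Your proposal is correct: it is essentially a reconstruction of the standard proofs of Helgason's Theorems IV.4.2 and IV.7.2, which is exactly what the paper's proof consists of --- a bare citation of those two theorems. Since the content you supply (the curvature formula via the canonical connection and transvections, and the Lie-triple-system correspondence via $\frak{h}=[\frak{c},\frak{c}]\oplus\frak{c}$ and the Gauss equation) is precisely what the cited results assert and how they are proved there, the two routes coincide.
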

\begin{proof}
This follows from \cite[Thm IV.4.2. and Thm IV.7.2]{Hel}.
\end{proof}

 \,
 
We use this statement to prove the following. 
\begin{prop}\label{P:tg}
The interior of the diagospectrahedron is parametrized by a flat totally geodesic space which is of the form
$$\mathscr{F}=\exp{\tilde{\frak{a}}},$$ where ${\tilde{\frak{a}}}$ is the Cartan subalgebra of $\frak{gl}_n(\K)$. The algebra ${\tilde{\frak{a}}}$ is given by
${\tilde{\frak{a}}}=c \cdot Id_n\oplus \frak{a}$, where $c\in\K$ and $\frak{a}$ is represented by all diagonal matrices with real diagonal entries and such that the trace is 0 if $\K=\R$ (resp. all diagonal matrices with diagonal entries $a+\imath b$ and such that the trace is 0, if $\K=\C$).
 \end{prop}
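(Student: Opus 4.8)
The plan is to realise the space that parametrizes the diagospectrahedron as the exponential image of the diagonal subspace of $\ft$, and then to invoke Proposition~\ref{T:Hel}. Recall from the introduction that $\sS^n_{>0}$ is a non-compact symmetric space with Cartan decomposition $\fg=\frak{k}\oplus\ft$, in which $\ft$ is identified with $T_e\sS^n_{>0}\cong\sS^n$, the space of symmetric matrices over $\K$, and $[\ft,\ft]\subset\frak{k}$, $[\frak{k},\ft]\subset\ft$. A diagospectrahedron is by definition an affine-linear slice $\cL\cap\sS^n_{\geq 0}$ with $\cL$ a space of diagonal matrices, so its interior is an open affine-linear subset of the set $\mathscr{D}$ of positive definite diagonal matrices. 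Hence it suffices to prove that $\mathscr{D}$ is a flat, totally geodesic submanifold of $\sS^n_{>0}$ equal to $\exp(\tilde{\frak{a}})$, with $\tilde{\frak{a}}$ as in the statement; the complex case is entirely analogous, with $\sS^n$ and $\mathscr{D}$ read in their complex incarnations.

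First I would check that the subspace $\frak{d}\subset\ft$ of diagonal matrices is a Lie triple system. This is immediate, since diagonal matrices commute: $[X,Y]=XY-YX=0$ for all $X,Y\in\frak{d}$, whence $[[X,Y],Z]=0\in\frak{d}$, so $\frak{d}$ is an abelian Lie triple system. By Proposition~\ref{T:Hel}, $\mathscr{F}:=\exp(\frak{d})$ is then a totally geodesic submanifold of $\sS^n_{>0}$ with $T_e\mathscr{F}=\frak{d}$; and since the exponential of a real diagonal matrix is diagonal with positive entries, while every positive definite diagonal matrix arises this way, $\mathscr{F}=\mathscr{D}$. The curvature formula of Proposition~\ref{T:Hel} also gives $R(X,Y)Z=-[[X,Y],Z]=0$ for $X,Y,Z\in\frak{d}$, so $\mathscr{F}$ is flat at $e$; flatness at every point of $\mathscr{F}$ then follows because the abelian group $\exp(\frak{d})$ acts on $\sS^n_{>0}$ by isometries, via $S\mapsto gSg^T$, and transitively on $\mathscr{F}$. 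Equivalently, restricting the Hessian metric of Lemma~\ref{l:Hess} to $\mathscr{F}$ yields, up to a positive constant, $\sum_i dx_i^2/x_i^2$, a product of $n$ factors each isometric to $\R$; this recovers the flat Monge--Amp\`ere subdomain of Corollary~\ref{C:EMA}.

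Next I would record the internal structure of $\frak{d}$. Every diagonal matrix $D$ decomposes uniquely as $D=\frac{\mathrm{tr}(D)}{n}Id_n+D_0$ with $\mathrm{tr}(D_0)=0$, so $\frak{d}=\K\cdot Id_n\oplus\frak{a}$, where $\frak{a}$ is the space of trace-zero diagonal matrices, the standard Cartan subalgebra of $\frak{sl}_n(\K)$, while $\frak{d}$ itself is a maximal abelian subspace of $\ft$, hence a Cartan subalgebra of $\frak{gl}_n(\K)$. Setting $\tilde{\frak{a}}:=\frak{d}$ reproduces the description in the statement: over $\R$ the entries are real, over $\C$ they are of the form $a+\imath b$, dictated in each case by the identification of $\ft$ with the space of symmetric matrices over $\K$. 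Combined with the first paragraph, this shows that the interior of the diagospectrahedron is an affine-linear open subset of the flat totally geodesic space $\mathscr{F}=\exp(\tilde{\frak{a}})$, as claimed.

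I expect the only delicate points to be bookkeeping rather than mathematics: one must fix which exponential map and which model of the (real or complex) cone are in force --- in the $\C$-case this is what dictates the admissible diagonal entries through the identification of $\ft$ --- and one must upgrade flatness at $e$ to flatness on all of $\mathscr{F}$, both of which are settled by the transitive isometric action of $\exp(\tilde{\frak{a}})$. The Lie-triple-system verification, which is normally the substance of a totally geodesic argument of this kind, is trivial here precisely because diagonal matrices pairwise commute.
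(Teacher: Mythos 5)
Your proposal is correct and follows essentially the same route as the paper: both arguments invoke Proposition~\ref{T:Hel} to realise the parametrizing space as $\exp$ of a Lie triple system, identify that system with the (maximal abelian) diagonal subspace of $\ft$, and split it as $c\cdot Id_n\oplus\frak{a}$ with $\frak{a}$ the trace-zero diagonal matrices. You actually supply several details the paper leaves implicit --- the explicit Lie-triple-system check via commutativity, the surjectivity of $\exp$ onto positive definite diagonal matrices, and the derivation of flatness from $R(X,Y)Z=-[[X,Y],Z]=0$ together with the transitive isometric action --- so your write-up is, if anything, more complete.
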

\begin{proof}
By \ref{T:Hel}, given a non-compact symmetric space, a totally geodesic submanifold is of the form $\exp{\frak{c}}$, where $\frak{c}\subset \ft$ is a Lie triple system. 
Therefore, the question boils down in our case to investigations on the existence of Cartan subalgebras. In the case considered, those are maximal abelian subalgebras where the Cartan involution is given by $ X\mapsto -X^t$. 

\, 

Assume $\K=\R$. Then, the Lie algebra attached to the symmetric space has the Cartan decomposition 
$$\frak{sl}_n(\R)=\frak{so}_n\oplus \frak{s}ym_0(n),$$ 
where $\frak{s}ym_0(n)$ is the set of symmetric matrices of trace 0 with entries in $\R$. The maximal abelian subspace $\frak{a}$ of $\frak{s}ym_0(n)$ is given by the set of diagonal matrices of null trace. 

\, 

Therefore, the totally geodesic submanifold is generated  from the Lie algebra  ${\tilde{\frak{a}}}=c \cdot Id_n\oplus \frak{a}$, where $\frak{a}$ is represented by all diagonal matrices with real diagonal entries and such that the trace is 0 if $\K=\R$. If $\K=\C$, these are all diagonal matrices with diagonal entries $a+\imath b$ and such that the trace is 0.

So, we have an explicit formulation of Lie triple systems $\frak{a}$ generating totally geodesic submanifolds in $\sS^n_{>0}$. 
\end{proof}

Finally, we connect ``optimized" objects with structures coming from 2D topological Quantum Field Theory. The space of diagonal matrices contains interesting properties related to 2D topological quantum field theory, as it satisfies the definition of a Frobenius manifold (i.e. the Witten--Dijkgraaf--Verlinde--Verlinde equation is satisfied) (see \cite[p.19-20]{Man99} and \cite{Man05,Man98}) suggesting further important questions relating quantum field theory with feasible regions. 
 
 \begin{thm}\label{P:AE}
The interior of the diagospectrahedron is parametrized by a space satisfying the Associativity Equations.
 \end{thm}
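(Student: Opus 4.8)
The plan is to reduce the Associativity Equations to an explicit computation on the flat totally geodesic model furnished by Proposition~\ref{P:tg}, where they become transparent because the relevant third-derivative tensor is diagonal. \emph{First}, I would use Proposition~\ref{P:tg} together with Corollary~\ref{C:EMA} to identify the parametrizing space of the interior of the diagospectrahedron with the flat totally geodesic submanifold $\mathscr{F}=\exp\tilde{\frak{a}}\subset\sS^n_{>0}$, where $\tilde{\frak{a}}=c\cdot Id_n\oplus\frak{a}$ consists of diagonal matrices, so that $\mathscr{F}$ is the open cone of positive diagonal matrices and the diagonal entries $x^1,\dots,x^n$ form a global affine coordinate system on it, inherited from the affine structure on $\sS^n_{\ge 0}$. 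Since $\mathscr{F}$ is totally geodesic, the flat torsionless connection $\nabla$, its pre-Lie structure (Lemma~\ref{L:preLie}) and the Hessian potential (Lemma~\ref{l:Hess}) all restrict to $\mathscr{F}$, so it suffices to verify the Associativity Equations for the restricted potential $\Phi$ and metric $g$ in the coordinates $x^i$.

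\emph{Next}, I would compute the characteristic function on this cone. The cone of positive diagonal matrices is self-dual for the trace pairing $\langle x,a^{*}\rangle=\sum_a x^a a^{*}_a$, so the integral formula for $\chi$ gives $\chi(x)=\prod_a (x^a)^{-1}$ up to a positive constant, and hence the potential is $\Phi=\ln\chi=-\sum_a\ln x^a$ up to an additive constant. Differentiating gives
\[
\partial_a\partial_b\Phi=\delta_{ab}(x^a)^{-2},\qquad \partial_a\partial_b\partial_c\Phi=-2\,\delta_{ab}\delta_{bc}(x^a)^{-3},
\]
so that $g_{ab}=\delta_{ab}(x^a)^{-2}$ is the positive definite Hessian metric of Lemma~\ref{l:Hess}, $g^{ab}=\delta_{ab}(x^a)^{2}$, and the third-derivative tensor $\Phi_{abc}$ vanishes unless $a=b=c$.

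\emph{Then}, the Associativity Equations follow by a one-line check. Setting $c^{d}_{ab}=\sum_e\Phi_{abe}g^{ed}=-2\,\delta_{ab}\delta_{ad}(x^a)^{-1}$, the multiplication $\partial_a\star\partial_b=\sum_d c^{d}_{ab}\partial_d=-2(x^a)^{-1}\delta_{ab}\,\partial_a$ is a direct sum of one-dimensional algebras, hence commutative and associative, and it is compatible with $g$ since $g(\partial_a\star\partial_b,\partial_c)=\Phi_{abc}$ is totally symmetric. Equivalently, in the left-hand side of the Associativity Equations the first cubic factor forces $a=b=e$, the inverse metric forces $e=f$, and the second cubic factor forces $f=c=d$, so the sum vanishes unless $a=b=c=d$ and then equals $4(x^a)^{-4}$; the right-hand side vanishes unless $f=c=b$, $e=f$, $e=a=d$, again forcing $a=b=c=d$ with the same value. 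Thus the Associativity Equations hold identically on $\mathscr{F}$, and one in fact obtains a genuine Frobenius manifold, with unit $e=-\frac{1}{2}\sum_a x^a\partial_a$ and Euler field coming from the homogeneity $\chi(\lambda x)=\lambda^{n}\chi(x)$ (over $\C$ a K\"ahler potential replaces the Hessian one).

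\emph{The main obstacle} I expect is the bookkeeping in the first two steps rather than the final computation: pinning down the correct potential on the parametrizing space and checking that the Hessian pre-Frobenius structure is genuinely inherited along the totally geodesic embedding. This is precisely where Proposition~\ref{P:tg} is essential, since the Cartan subalgebra description guarantees simultaneous diagonalizability and vanishing curvature, and hence the diagonal shape of $\Phi_{abc}$; once that shape is in hand, the Associativity Equations require nothing beyond routine differentiation and no further curvature or analytic input beyond Theorem~\ref{T:2}.
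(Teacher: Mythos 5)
Your proposal is correct, and it reaches the conclusion by a genuinely different route from the paper. The paper's proof is abstract: it takes the (unspecified) smooth potential $\Phi$ in flat coordinates, forms $g_{ij}=\partial_i\partial_j\Phi$ and $A_{ijk}=\partial_i\partial_j\partial_k\Phi$, invokes the flatness of the diagonal locus (asserted in Corollary~\ref{C:EMA}), and deduces the Associativity Equations from the identity $R_{acdb}=\sum_{e,f}g^{ef}(A_{eab}A_{fcd}-A_{ead}A_{fcb})$ expressing the curvature of a Hessian metric through the cubic tensor. You instead make the potential explicit --- the characteristic function of the positive orthant gives $\Phi=-\sum_a\ln x^a$ up to constants --- so that $g_{ab}$ and $\Phi_{abc}$ are diagonal and the equations reduce to an index-chasing identity; your computation checks out (both sides vanish unless $a=b=c=d$, where they equal $4(x^a)^{-4}$). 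What each approach buys: the paper's argument would apply to any flat Hessian potential on the diagonal slice without knowing it in closed form, but it leans on the unproved assertion that the curvature vanishes (``a computation shows\dots''), whereas your version supplies exactly that computation and, as a bonus, exhibits the multiplication as a direct sum of one-dimensional algebras with an explicit unit, i.e.\ a genuine Frobenius structure rather than merely a solution of the Associativity Equations. The one point to keep straight is the normalization of the potential: restricting the characteristic function of $\sS^n_{>0}$ to the diagonal gives a constant multiple of $-\sum_a\ln x^a$ rather than the intrinsic characteristic function of the orthant, but rescaling $\Phi$ by a constant rescales both sides of the Associativity Equations by the same factor, so your verification is unaffected.
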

 \begin{proof}
 By construction, the spectrahedron carries an affine structure and is equipped with a potential function $\Phi$ which is smooth. Therefore, one can construct rank 2 and rank 3 symmetric tensors given in flat coordinates (that is coordinates such that $\nabla(dx^i)=0$) given from $g_{ij}=\partial_i\partial_j\Phi$ and $A_{ijk}=\partial_i\partial_j\partial_k\Phi.$ In particular, we have that $\det(g_{ij})\neq 0$ and  this defines a non-degenerate (Hessian) metric.  
 By Corollary~\ref{C:EMA} the spectrahedron is parametrized by an elliptic Monge--Amp\`ere domain.
 
 \, 
 
Restricting our attention to the space of diagonal matrices, one can show that the scalar curvature vanishes on this locus. 
The Riemann curvature tensor can be expressed in terms of the rank three symmetric tensors as follows:
  \begin{equation}\label{E:C}
 R_{acdb}=\sum_{e,f} g^{ef}(A_{eab}A_{fcd}-A_{ead}A_{fcb}).
 \end{equation}
By the flatness hypothesis on spectrahedron, the curvature tensor vanishes. Therefore, from Eq.\ref{E:C} it follows that we obtain the equality  $\sum_{e,f} g^{ef}A_{eab}A_{fcd}=\sum_{e,f} g^{ef}A_{ead}A_{fcb}.$ 
Naturally this can be expressed as $\sum_{e,f} A_{eab}g^{ef}A_{fcd}=\sum_{e,f} A_{ead}g^{ef}A_{fcb}.$  
By symmetry of the rank 3 symmetric tensors, we can rewrite this as $\sum_{e,f} A_{abe}g^{ef}A_{fcd}= \sum_{e,f} A_{fcb}g^{ef}A_{ead}$.   
Therefore, it follows that on the spectrahedron the following holds:
 \begin{equation} \forall a,b,c,d: \quad 
\sum_{e,f}\partial_a\partial_b\partial_e\Phi g^{ef}\partial_f \partial_c\partial_d\Phi= \sum_{e,f} \partial_f\partial_c\partial_b\Phi g^{ef}\partial_e\partial_a\partial_d\Phi. 
\end{equation}
In other words, those are the Associativity Equations. 
 \end{proof}
 
\section{Log-likelihood and Maximum--Likelihood}
Let $\Upsilon$ be a sample space. By {\it statistical model} we mean the family of probability measures  $P_{\theta}$ such that $$(P_{\theta}\in {\bf P}(\Omega)\, |\, \theta\in \Theta),$$ defined on the same sample space $\Upsilon$ and parametrized by the parameter set $\Theta$. 

\subsection{Cone of concentration matrices}
We consider cone of concentration matrices $K_{\cL}=\cL\cap \sS^n_{>0}$ and {\it linear concentration models}. The space the covariance matrices for such models obtained from $K_{\cL}^{-1}$ forms here an a semi-algebraic set. 

A multivariate Normal distribution $\cN(\mu,S)$ on $\mathbb{R}^n$ is determined by the following two elements:
\begin{enumerate}
    \item the mean value vector $\mu\in\mathbb{R}^n$;   
    \item the \emph{covariance matrix}, being a  semi positive definite $n\times n$ matrix $S\in \sS^n_{\geq0}$. 
    \end{enumerate}

The Maximum Likelihood (ML) degree is tightly related to Wishart laws \cite{AW,W}: in a multivariate sample, Wishart distributions arise as the distribution of the maximum likelihood estimator of covariance matrices. Assume that the observable $x\in\R^n$ follows the multivariate normal distribution $\mathcal{N}_n(0,S)$ on $\R^n$ with expectation $0$
 and covariance matrix $S$ lying in the open cone of positive definite $n \times n$ matrices with real coefficients. For simplicity we denote this $\mathcal{N}_n(S)$. Then the considered statistical model is given by 
 \[(\cN_n(S)\in {\bf P}(\R^n)\, |\, S\in \sS^n_{>0}),\] where  ${\bf P}(\Upsilon)$ denotes the set of probability measures on on a sample space $\Upsilon$. 

%%%%%%%

We are considering a statistical model that consists of $k$ independent repetitions of the model described above, generating the model below:

 \[(\cN_n(S)^{\otimes k}\in {\bf P}(\R^n)^k\, |\, S\in \sS^n_{>0}),\] 
 
 where $\cN_n(S)^{\otimes k}$ denotes the distribution of the observable ${\bf x}= (x_\nu\, |\, \nu\in [k])$ and $[k]=\{1,\cdots,k\}$.

The ML estimator $\tilde{S}$ of $S\in \sS^n_{>0}(\R)$ exists with probability one if and only if $k\geq 1$ and it is uniquely given by $\tilde{S}({\bf x})=\frac{1}{k}{\bf x}{\bf x}^t$.

The distribution of the ML estimator $\tilde{S}$ is modelled after the classical Wishart distribution, which is parametrized by two objects: 
\begin{itemize} 
\item the degrees of freedom, denoted $k$;
\item a multivariate scale $\frac{1}{k}S\in \sS^n_{>0}(\R)$;
 
\end{itemize}

We can resume this by saying that the family of Wishart distributions is parametrized by  the multivariate scale $\mathscr{K} \in \sS^n_{>0}(\R)$ and the degrees of freedom $f \in \{n, n +1, n+2, \cdots\}$.  The expectation of the Wishart distribution is $f\mathscr{K}$.

Another definition of the Wishart distributions is possible in terms of their Laplace transforms and/or their characteristic functions (see \cite{Ca1,Ca2} for instance). 

\, 
\subsection{Complete quadrics, log-likelihood and ML degree}
The log-likelihood of observing a sample covariance $S$ at a concentration matrix $K$ is 
$$\ell_S(K)= \log\det(K)-tr(KS).$$

 The ML degree counts the complex critical points of the log-likelihood function $\ell$, as we vary over the Zariski closure of the model. The Zariski closure of a linear model is a linear space of the symmetric matrices $\sS^n$. 

%The ML degree of a variety in $\sS^n$ is the number of critical points of the function $K\mapsto \log(\det(K))-Tr(K\cdot S)$, where $S$ is generic. 

For a given $\cL$ it is the number of matrices, for generic $S\in \sS^n$ in the intersection $\cL^{-1}\cap \bigg( \cL^{\perp}+ S\bigg)$.
For a matrix $\Sigma$ lying in the intersection, the corresponding critical point in $\cL$ is the inverse $\Sigma^{-1}$. It is therefore natural to be expecting critical points of  $\ell$ to exist in the closure of the model

This is confirmed by \cite{MMW}, where it is shown that the $ML$-degree can be expressed as a sum, over a finite number of torus fixed points on the variety of complete quadrics $\GM$, of rational numbers assigned to each of them. The relation to the variety of complete quadrics $\GM$ is natural due to the identification of symmetric matrices with quadratic forms. Taking the cone $\sS^n_{\geq 0}$ leads thus to the notion of complete quadrics, which are well known objects, and which offer a well-defined description of $\partial{\sS^n_{\geq0}}$.

We show connections between the log-likelihood and the potential function on the cone. By \cite{C24a} we highlight the fact that the $\sS^n_{>0}$ cone satisfies the axioms of a pre-Frobenius manifold. What plays a crucial role in the definition of a pre-Frobenius manifold is that it is an affine manifold and that everywhere locally there exists a well-defined potential function. We show that this potential function can be related to the log-likelihood function. 

\begin{lem}\label{L:log} 
Consider linear concentration models such that $K_{\cL}=\cL\cap \sS^n_{>0}$ is the cone of concentration matrices.
Let $K$ be a concentration matrix and $S$ a sample covariance matrix. 
Then, the log-likelihood function of observing a sample covariance $S$ at a concentration matrix $K$, given by $\ell_S(K)= \log\det(K)-tr(KS)$
generates a potential function on the cone $K_{\cL}\subset \sS^n_{>0}$ at the identity. 
\end{lem}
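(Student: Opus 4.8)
The plan is to make explicit the dictionary between the statistical object $\ell_S(K)=\log\det(K)-\mathrm{tr}(KS)$ and the potential function $\Phi=\log\chi$ attached to the cone $\Omega=\sS^n_{>0}$ in Section~\ref{S:HessianStr}, and then to restrict that identification to the affine slice $\cL$. First I would recall the classical evaluation of the characteristic function of the symmetric cone $\sS^n_{>0}$: up to a positive multiplicative constant $c_n$ (a Gamma-type factor depending only on $n$, coming from the normalization of the translation-invariant measure $da^*$ on $\Omega^*$), one has $\chi(K)=c_n(\det K)^{-(n+1)/2}$ for $K\in\sS^n_{>0}$ (see \cite{FK}). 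Hence $\Phi(K)=\log\chi(K)=\log c_n-\tfrac{n+1}{2}\log\det K$, so that, up to an affine-in-$\log\det$ rescaling and an additive constant, the negative of the cone potential $-\Phi$ \emph{is} the function $K\mapsto\log\det K$. This is the first, purely computational, step.

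Next I would bring in the sample covariance $S$. The term $-\mathrm{tr}(KS)=-\langle K,S\rangle$ is exactly the linear form whose exponential appears under the integral sign in the defining formula $\chi(K)=\int_{\Omega^*}e^{-\langle K,a^*\rangle}\,da^*$; equivalently, fixing $S\in\sS^n_{>0}$ and viewing $\ell_S$ as a function on $\cL$, the map $K\mapsto \ell_S(K)$ differs from $-\Phi(K)$ (suitably normalized) precisely by the linear functional $-\langle\cdot,S\rangle$. Adding a linear functional to a potential does not change the Hessian metric $g=\nabla^2\Phi$ nor the affine-flat structure, so $\ell_S$ is again a potential for the same Hessian structure on $\cL\cap\sS^n_{>0}$ — this is the content of ``generates a potential function on the cone $K_{\cL}$''. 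I would then specialize to the identity: at $K=\mathrm{Id}$ one has $\chi(\mathrm{Id})=c_n$, $\nabla\Phi|_{\mathrm{Id}}$ is computed from $\partial_u\log\det K|_{\mathrm{Id}}=\mathrm{tr}(u)$, and $-\partial_u\partial_v\log\chi|_{\mathrm{Id}}=\tfrac{n+1}{2}\mathrm{tr}(uv)$, recovering (a positive multiple of) the trace form $\langle u,v\rangle=\mathrm{tr}(uv)$ that Section~1 singled out as the self-duality pairing. This shows the potential generated by $\ell_S$ induces at $\mathrm{Id}$ exactly the canonical metric of the cone, which is what ``at the identity'' refers to.

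To finish, I would invoke Lemma~\ref{l:Hess} and Theorem~\ref{T:1}: the cone $\sS^n_{>0}$ carries the Hessian structure $(\nabla,g)$ with potential $\Phi=\log\chi$, the affine subspace $\cL$ inherits the affine-flat structure, and by the previous two paragraphs $\ell_S|_{\cL}$ is, up to an additive constant and the irrelevant linear shift $-\langle\cdot,S\rangle$, equal to $-\tfrac{2}{n+1}$ times $\Phi|_{\cL}$; since $\Phi$ is strictly convex (H\"older, as recalled before Lemma~\ref{l:Hess}) and multiplying by a negative constant together with adding a linear form preserves the property of being a (strictly concave, hence after sign change strictly convex) potential for the restricted Hessian metric, $\ell_S$ generates a potential function on $K_{\cL}$, and its Hessian at $\mathrm{Id}$ is the restriction of the canonical cone metric. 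I expect the only genuine subtlety — and hence the main point to argue carefully rather than the routine differentiations — to be bookkeeping of signs and of the constant $(n+1)/2$: $\ell_S$ is concave while the cone potential $\log\chi$ is convex, so one must be explicit that ``generates a potential'' is meant up to the sign reversal and the positive scaling, and that the critical-point/Legendre-dual picture of the ML problem (the intersection $\cL^{-1}\cap(\cL^\perp+S)$ recalled above) is exactly the Legendre transform of this potential — this is the place where the statement could look stronger than it is if the normalization were glossed over.
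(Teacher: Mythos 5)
Your argument is correct in substance, but it proves the lemma by a genuinely different route than the paper does. The paper's proof never uses the closed form $\chi(K)=c_n(\det K)^{-(n+1)/2}$ and never discusses Hessians: it simply exponentiates the log-likelihood to get $\exp\ell_S(K)=\det(K)\exp\{-\mathrm{tr}(KS)\}$, sets $K=\mathrm{Id}$ so that $\exp\ell_S(\mathrm{Id})=\exp\{-\langle \mathrm{Id},S\rangle\}$, and observes that this is exactly the integrand of $\chi(\mathrm{Id})=\int_{\Omega^*}\exp\{-\langle \mathrm{Id},S\rangle\}\,dS$; ``generates the potential at the identity'' is meant in the literal sense that integrating the exponentiated log-likelihood over the dual cone of sample covariances and taking the logarithm reproduces $\Phi(\mathrm{Id})=\ln\chi(\mathrm{Id})$. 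Your reading is stronger and more geometric: you identify $\ell_S$ itself, as a function of $K$, with $-\tfrac{2}{n+1}\Phi$ up to an additive constant and the linear shift $-\langle\cdot,S\rangle$, and conclude that $\ell_S$ is a potential for the same Hessian structure because linear terms do not affect second derivatives in flat coordinates. What your approach buys is an actual statement about the Hessian geometry of $K_{\cL}$ (and it restricts correctly to the affine slice $\cL$, which the paper's proof does not really address); what the paper's approach buys is that it works without invoking the explicit evaluation of $\chi$ and ties the potential directly to the Wishart/Laplace-transform picture. One small caution: your displayed value $-\partial_u\partial_v\log\chi|_{\mathrm{Id}}=\tfrac{n+1}{2}\mathrm{tr}(uv)$ has the sign flipped relative to a direct computation (one finds $+\partial_u\partial_v\log\chi|_{\mathrm{Id}}=\tfrac{n+1}{2}\mathrm{tr}(uv)$ since $\log\chi$ is convex); the paper itself is inconsistent on this convention, and you flag sign bookkeeping as the main subtlety, so this does not affect the validity of your argument.
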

\begin{proof}
Suppose that the cone of concentration matrices is defined as $K_{\cL}=\cL\cap \sS^n_{>0}$.
 Take $K\in K_{\cL}$. The space $\sS^n_{>0}$ is equipped with a potential function given by $\ln\chi(x)=\ln\int_{\Omega^*}\exp{\{-\langle x,a^*\rangle\}} da^*$. The characteristic function satisfies the following property: given an element $K=\Sigma^{-1}$ of $Gl_n$ we have $\chi(Kx)=det K\cdot \chi(x)$.

 \,

 \, 
 
The function $\ell_S(K)= \log\det(K)-tr(KS)$ can be transformed into $$\exp\ell_S(K)=\exp\{\log\det(K)-tr(KS)\}.$$ In other words,  $\exp\ell_S(K)=\det(K)\exp\{-tr(KS)\}$. Putting $K=Id$, implies that $\exp\ell_S(Id)=\exp\{-tr(Id\cdot S)\}$. We can identify $\langle A , B \rangle$ with the trace $Tr(AB)$. So, using the log-likelihood function, we can define the potential function at the identity by:
 $$\ln\chi(Id)=\ln\int_{\Omega^*}\exp{\{-\langle Id,S\rangle\}} dS=\ln\int_{\Omega^*}\exp\{-Tr(Id\cdot S)\}dS.$$

\,

Therefore, the log-likelihood function generates the potential function on $\sS^n_{>0}$ at the identity.
\end{proof}

\,
 
\section{Boundary and compactification}
 The boundary of the cone and in particular the compactification carries a rich geometry that is of interest to us. In this section, we work in the complex framework.

 \,
 
We will apply this knowledge to the spectrahedron parametrized by the diagonal matrices. We will show relations to the permutohedron and objects important in 2D topological quantum field theory. Finally, we introduce the notion of Frobenius residuals which are objects in the compactification of a Frobenius manifold. 

\,

In what follows we overview the geometry of complete quadrics and relations to the permutohedra.

\, 

\subsection{} A non degenerate quadric can be represented as symmetric $n\times n$ complex matrices with non vanishing determinant. Given two such matrices, they can be considered equivalent if they are a scalar multiple of the other. Therefore, this construction allows us to define a bijection between the cone of non-degenerate quadratics in the complex space $\mathbb{C}^n$  and the space of nonsingular subvarieties of degree 2 in the projective space $\mathbb{P}^{n-1}$, leading thus to the notion of complete quadrics.

\, 

 Let $Q$ be the variety of nonsingular quadric hypersurfaces in $\mathbb{P}^{n-1}$. It has a smooth projective compactification $\overline{Q}$. This compactification goes in the literature under the name of {\it variety of complete quadrics} in $\mathbb{P}^{n-1}$. This forms a particular case of complete collineations \cite{T}. Given $V,W$ a pair of finite-dimensional vector spaces such that $n\leq min(dim(V),dim(W))$, a {\em rank $n$ complete collineation} $V\to W$ is a $n$-dimensional subspace $U$ of $V$, together with a
finite sequence of nonzero linear maps $f_i$, where $f_1: U \to W$,
$f_{i+1} : ker f_i \to coker f_i$, and the last $f_i$ has maximal
rank.

\, 

By choosing $W=V^*$, the previous construction generates {\em complete quadrics}. Assuming the objects of study are symmetric in $V$, we obtain a projectivized vector space of symmetric maps $V^*\to V$, denoted $\mathbb{P}^N$.  Suppose $dim(V)=n+1$. As a result, a natural stratification by rank occurs:
\[\cV_1\subset \cV_2\subset \cdots\subset \cV_n\subset \cV_{n+1}=\mathbb{P}^N,\] where $\cV_r$ denotes the subvariety of $\mathbb{P}^N$ of quadric loci of rank smaller or equal to $r$. These are parametrized by the set of symmetric maps of rank at most $r$ (up to a proportional factor). 
The space $\cV_r$ is a closed subscheme of $\cV_{r+1}$, since it is a determinantal variety. For the natural action of $PGL(V)$ on  the projectivized space, each stratum $\cV_{r+1}\, -\, \cV_{r}$ forms an open orbit.  This action has only one closed orbit: $\cV_1$. We refer to \cite{F,Vs,CGMP} for more details. 
 
\,

The complete quadrics are obtained by blowing up successively the quadrics of rank 1, the proper transform of quadrics of rank 2, the proper transform of quadrics of rank 3, etc. In other words, the moduli space of complete quadrics in the projective space is isomorphic to the variety defined by blowing-up  the projectivized space $\mathbb{P}^N$ along images of the Veronese embedding and then along the proper transforms of each of its secant varieties (see the construction in the Appendix of \cite{T}).  The moduli space of complete quadrics can be considered as a closure in the Hilbert scheme of $\mathbb{P}V\times \mathbb{P}V^*$ for the locus of graphs of invertible self-adjoint linear maps $V\to V^*$, where $V$ is a finite dimensional vector space.

\subsection{Toric (nondegenerate) quadrics }
Diagonal (nondegenerate) quadrics are given by diagonal matrices, with respect to a decomposition of the vector space $\C^n$  into subvector spaces $V_i$ such that $$\C^n=V_1\oplus V_2\oplus \cdots \oplus V_r.$$ The notion of diagonal quadric with respect to a direct sum decomposition of $\C^n$ is important due to its relations to toric varieties. More precisely, the variety of all complete diagonal quadrics (with respect to the decomposition of $\C^n$ into lines) is isomorphic to a toric variety. 

This forms a torus, that we shall write as $T$. The character group 
is generated by 

$\{\exp(\alpha_1), \cdots, \exp(\alpha_{n-1})\}$, where  $\exp(\alpha_{i})(t)=t_{i+1}/t_i$ and elements $t_i$ are entries of the diagonal matrix. 
If $X_*(T)$ is the character group, then $X_*(T)\otimes\mathbb{R}$ can be identified with a hyperplane $\mathcal{Y}$ given by the set $$\mathcal{Y}=\{(a_1,\cdots a_n)\in \mathbb{R}^n | \sum_{i=1}^n a_i=0\}.$$ Integral points in $\cY$ are given by the points $a$ where $a_{i+1}-a_i\in \mathbb{Z}$ are integers. One can identify the closure of the diagonal non-degenerate quadrics with a toric variety associated to the cone decomposition $F$ of $\cY$ into rational polyhedral cones, generated by the hyperplanes: $a_{i}=a_j$ for $i\neq j$. 

\,

\subsection{BB cells for complete quadrics}
Let $G$ be a semisimple simply connected algebraic group over $\C$, $s:G\to G$ an automorphism of order 2. and the group $H=G^s$ is a subgroup of $G$ formed from elements fixed under $s$. Let $\dot{N}$ be its normalizer. The space $G/\dot{N}$ is a symmetric variety. Whenever a torus $T$ satisfies the following property: $s(t)=t^{-1}$, $t\in T$ the torus is said to be anisotropic. 

\,

The space $\sS^n_{>0}$ is a symmetric space, identified with $GL_{n+1}/O_{n+1}$ and the variety of nonsingular quadrics $Q$ forms also a symmetric variety, where the group $G$ is identified with $SL_{n+1}$ and the involution $s$ can be given by $s(A)=({A^t})^{-1}$ ($t$ is the transpose). One takes $\dot{N}$ to be the normalizer of $SO_{n+1}$. The open orbit $G/\dot{N}$ parametrizes the set of quadrics in the projective space.

 \,
 
 Symmetric varieties all have a natural compactification. This can be described for instance as the minimal one which is {\it wonderful}. The boundary $\overline{Q}\setminus Q$  is a union of nonsingular divisors intersecting transversally and whose natural stratification as a divisor with normal crossings coincides with its stratification by orbits. 
 
 \,
 
 The latter comment means that the natural compactification is equivariant. Remark that for any $G$-equivariant compactification of the space $Q$ the closure of the space of diagonal matrices meets every $G$-orbit.
 
 \, 
 
 Using a more general statement, we explain the geometry of  the boundary $\overline{Q}\setminus Q$. Assume $G$ and $H$ are defined as above. Then there exists a canonical variety $X$ (for instance $\overline{Q})$ which is smooth and projective and such that: 
\begin{enumerate}
\item $X\, -\, G/H$ is given by $D=\bigcup\limits_{i=1}^m D_i$ such that connected components $D_i$ are smooth and intersect transversally.
\item Each orbit closure is of the form $D_I=\bigcap\limits_{i\in I}D_i$, where $I\subset [m]$.
\item The (full) intersection $\bigcap\limits_{i=1}^m D_i$ is the {\it unique} closed orbit in $\Q$.
\end{enumerate}

We comment on the last two statements (2) and (3).  Let $G=Sl_{n+1}$, $\dot{N}$ be the normalizer of $SO_{n+1}$ and
$T$ be a maximal torus of diagonal matrices. The Weyl group is then identified with $\mathbb{S}_{n+1}$. Let $B$ be the Borel group of upper triangular matrices, where $ B\supset T$. Assume $\Delta=\{g_1,\cdots,g_m\}$ is the set of simple roots with respect to $B$. 

\,

$\bullet$ Item (2) implies that there exists a bijection between the set of closed orbits and the set of subsets of $\Delta$. This statement allows a construction of a bijection from the set of divisors $\{D_1,\cdots, D_m\}$ to the set of simple roots in $\Delta$. The existence of an extra poset structure on the finite sets $[m]$ where the binary relation is given by $[n]\leq [m]$ iff $\{1,\cdots,n\}\subset \{1,2,\cdots, m\}$ is inherited on the set of divisors $\{D_1,\cdots, D_m\}$. Therein, the binary relation is such that $D_{[n]}=\bigcap\limits_{i\in [n]}D_i\subset D_{[k]}=\bigcap\limits_{j\in [k]}D_j$ iff $[k]\leq [n]$. 

\, 

$\bullet$ Item (3) can be understood as follows.  The variety $G/B$ is a variety of complete flags in the projective space and the unique closed orbit in $\Q$ is isomorphic to $G/B$. 
\,

\,

Via results of Bia\l{}ynicki-Birula (BB) \cite{BB},  the variety of complete quadrics $\overline{Q}$ can be given a paving by {\it affine} spaces/cells. To any such affine cell is associated in a canonical way  a fundamental class. Such classes form an integral basis for the homology of $\overline{Q}$. The center of those cells are the {\it fixed points} with respect to the torus action. The BB cell decomposition is obtained by applying the following statement. 

\, 

Let $X$ be the smooth projective variety endowed with the action of a torus $T$. 
By BB's theorem, if there exists a finite number of fixed points $\{y_1,\, \cdots,\, y_m\}$ under $T$ then it is possible to construct a decomposition of $X$ such that  
$$X=\bigcup_{i=1}^m C_{y_i},$$ where each  $C_{y_i}$ forms an affine cell/space centered at $y_i$.

\,

For complete quadrics, the corresponding BB decomposition generates an important combinatorial object: the permutohedron, the convex hull of the $\mathbb{S}_{n+1}$–orbit of the set $[n+1]=\{1, 2, \cdots, n+1\}$. This is natural, due to the fact that the Weyl group is a group of permutations $\mathbb{S}_{n+1}.$ Recall that an $n$-permutohedron is a polytope of dimension $n$ embedded in a space of dimension $n+1$, where vertices correspond to the permutations of the first $n+1$ natural numbers and edges correspond to the shortest possible paths connecting two vertices. The torus-fixed points for the space of diagonal matrices correspond to the {\it vertices} of the permutohedron. The permutohedron has relations to the shuffle algebra \cite{Ld}. 

\,

Let $\cW_J\subset \cW=\mathbb{S}_{n+1}$ be a subgroup of the Weyl group $\cW=\mathbb{S}_{n+1}$, where $J\subset \Delta$.
Using the theory of Bruhat cells, one can show that there exists a bijection between the set of fixed points under the torus $T$ and the stratified space given by $\bigcup\limits_{J\in \mathcal{S}} \cW/\cW_J$ where:
\begin{itemize}
\item the set $\mathcal{S}$ is a finite set $\mathcal{S}:=\{J\subset \Delta\, |\, J=\{g_{i_1},\cdots,g_{i_s}\}, (g_{i_j},g_{i_k})=0\, \text{for}\, k\neq j \}$ 
\item $\cW_J$ is the subgroup of the Weyl group $\cW=\mathbb{S}_{n+1}$ generated by the set of simple reflections with a number of $i_s$ generators where  $\cW_J\cong (\mathbb{Z}/2\mathbb{Z})^s$.
\end{itemize}
The quotients $\cW/\cW_J$ form a ranked poset.

\, 

\subsection{Complete quadrics and Frobenius residuals} 
As we have mentioned earlier, by taking the closure (and solving in an adequate manner the corresponding singularities) the inclusion of diagonal matrices in the space of symmetric matrices leads to the inclusion of the permutohedral variety, associated to a toric variety, in the variety of complete quadrics.

\, 

 We remark that this inclusion of diagonal matrices in the space of symmetric matrices, corresponds for the open stratum  $Q$ to the embedding of a Frobenius manifold (that is a manifold satisfying the Associativity Equations) in a pre-Frobenius manifold i.e. a preliminary structure which does not  necessarily satisfy the associativity condition on the algebra defined over the tangent sheaf. 
The construction via BB decomposition and resolution of singularities is interesting because it allows to consider the closure for this pre-Frobenius manifold as well as for the embedded Frobenius manifold. 

\begin{dfn}\label{D:FR}
Let $Z$ be a Frobenius  domain. Let $\overline{Z}$ be the compactified Frobenius domain, obtained after an adequate resolution of singularities.
We call Frobenius residuals elements in $\overline{Z}\setminus Z$.  
\end{dfn} 
The Frobenius manifold we choose here is the space of  diagonal matrices, parametrizing a polyhedral spectrahedron. 

The Frobenius residuals lie on the boundary of the compactified Frobenius domain (the compactified space of diagonal matrices) and  they have non-empty intersection with every $G$-orbit.   

\,

We can put this in parallel with the construction of Losev--Manin. The Losev--Manin spaces $\overline{LM}_n$ parametrize strings of $\mathbb{P}^1$'s and all marked points with exception of two are allowed to coincide. Their important similarities with the previous object are due to the fact that a Losev--Manin space is a toric variety associated to the fan formed by Weyl chambers of the root system of type $A_{n}$. The Weyl group associated to $A_{n}$ is isomorphic to $\mathbb{S}_{n+1}$, as previously.  The spaces $\overline{LM}_n$ are toric manifolds associated to a permutohedron. 

\,

Their relation to formal Frobenius manifolds and commutativity equations follows from \cite{LoMa}.  Formal solutions to the Associativity Equations (i.e. formal Frobenius manifolds) are equivalent to cyclic algebras over the homology operad $H_*(\overline{M}_{0,n+1})$ of the moduli spaces of $n$–pointed stable curves of genus zero \cite{Man99,Man98} whereas  pencils of formal flat connections (solutions to commutativity equations) can be obtained from the homology of $\overline{LM}_n$  of pointed stable curves of genus zero.

 \, 
 
  \begin{prop}\label{P:p}
  Let $\overline{T}$ the the compactified moduli space of diagonal matrices. 
 The Frobenius residuals corresponding to $\overline{T}$ carry naturally the structure of a permutohedron. The   Frobenius residuals admit a decomposition induced by the BB cell decomposition. 
  \end{prop}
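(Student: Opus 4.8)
The plan is to identify the compactified moduli space $\overline{T}$ with the permutohedral toric variety already encountered in the discussion of diagonal quadrics, and then to read the structure of the Frobenius residuals off the toric boundary together with the Białynicki--Birula paving of $\overline{T}$.

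First I would recall that, by Theorem~\ref{P:AE} together with Proposition~\ref{P:tg}, the interior moduli space of diagonal matrices $T$ is a Frobenius manifold, so that Definition~\ref{D:FR} applies to it; and that, after projectivization, $T$ is precisely the torus of nondegenerate diagonal quadrics with respect to the decomposition of the ambient space into lines, whose cocharacter space is the hyperplane $\cY=\{(a_1,\dots,a_n)\in\R^n\mid \sum_i a_i=0\}$. The adequate resolution of singularities producing $\overline{T}$ in the sense of Definition~\ref{D:FR} is exactly the closure of $T$ inside the variety of complete quadrics $\overline{Q}$: as recalled in the subsection on toric quadrics, this closure is the smooth projective toric variety attached to the fan $F$ obtained by cutting $\cY$ along the hyperplanes $a_i=a_j$, $i\neq j$, that is the fan of Weyl chambers of the root system of type $A$, whose associated polytope is the permutohedron --- the convex hull of the $\mathbb{S}_{n+1}$--orbit of $[n+1]$.

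Next I would apply the Białynicki--Birula theorem to $\overline{T}$: being smooth, projective and equipped with the torus action having only finitely many fixed points, $\overline{T}$ admits a decomposition $\overline{T}=\bigcup_i C_{y_i}$ into affine cells, the fixed points $y_i$ being precisely the vertices of the permutohedron (the torus--fixed points of the diagonal complete quadrics). The Frobenius residuals $\overline{T}\setminus T$ are the complement of the dense torus orbit, hence the toric boundary divisor of $\overline{T}$; being closed and torus--invariant, this boundary is a union of BB cells, so the BB decomposition of $\overline{T}$ restricts to a paving of $\overline{T}\setminus T$ by affine cells, which gives the second assertion. For the first assertion, one uses that the torus orbits of a toric variety are in inclusion--reversing bijection with the cones of its fan, equivalently with the faces of the associated polytope; restricting to proper orbits, the orbit stratification of $\overline{T}\setminus T$ is in order--reversing bijection with the proper faces of the permutohedron, so the Frobenius residuals, as a stratified space, reproduce its face lattice. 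This is what is meant by saying that they carry the structure of a permutohedron.

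The main obstacle I expect is purely one of bookkeeping: matching the three indexing sets --- the BB fixed points and cells of $\overline{T}$, the cones of the fan $F$, and the faces of the permutohedron --- and checking that the orbit stratification of the wonderful compactification $\overline{Q}$, restricted to the diagonal locus, agrees with the fan stratification of the toric variety, this being where items~(2)--(3) of the orbit description of $\overline{Q}$ and the identification of the Weyl group with $\mathbb{S}_{n+1}$ enter. Once this dictionary is established, the permutohedral structure and the induced BB decomposition of $\overline{T}\setminus T$ follow formally; one may moreover observe that this is precisely the mechanism by which the Losev--Manin space $\overline{LM}_n$ acquires its permutohedral boundary, making the analogy stated in the introduction precise.
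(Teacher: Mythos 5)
Your proposal is correct and takes essentially the same route as the paper: both identify $\overline{T}$ with the permutohedral toric variety attached to the fan of Weyl chambers of type $A_n$, read the permutohedron off the torus-fixed points and the orbit/face stratification of the boundary, and obtain the decomposition of the Frobenius residuals by restricting the Bia\l{}ynicki--Birula paving. The only cosmetic differences are that the paper assembles $\overline{T}$ explicitly as the union of $\mathbb{S}_{n+1}$-translates of a $T$-stable affine chart rather than invoking the fan--polytope dictionary, and that the boundary decomposition should be phrased as \emph{induced by intersecting} with the BB cells rather than the boundary being literally a union of cells (the open cell meets both the dense torus and its complement).
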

  \begin{proof}
Take a $T$-stable affine open set $\cA$ of $\overline{T}$, whose associated polyhedral cone is the fundamental Weyl chamber. The action of $\cW=\mathbb{S}_{n+1}$ stabilizes $\overline{T}$ and hence also $\bigcup\limits_{s\in \mathbb{S}_{n+1}}s\cA$. The open sets $s\cA$ correspond to distinct Weyl chambers. Since the chambers decompose $\cY$, we have that $\bigcup\limits_{s\in \mathbb{S}_{n+1}}s\cA$  is complete. Therefore it coincides with  $\overline{T}$.
Each orbit of the action of the projective group on the space of complete quadrics intersects $\cA$ in a $T$-orbit. The construction of the structure described forms a {\it permutohedron}. Applying our knowledge on BB cells: every $T$-fixed point lies in the center of a BB cell, these are vertices of the permutohedron. The BB cells pave the compactification of the smooth projective variety endowed with the action of $T$. It is enough to take the intersection of $\overline{T}$ with the BB cell decomposition to see that the Frobenius residuals admit a decomposition induced by the BB cells.      
  \end{proof}

\begin{thm}\label{T:3}
The ML degree can be expressed as a sum over a finite number of rational numbers indexed by $T$-fixed points lying on the Frobenius residuals. 
\end{thm}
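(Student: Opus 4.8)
The plan is to combine the structural result of \cite{MMW} — that the ML degree is a sum, over the finitely many torus-fixed points of the variety of complete quadrics $\GM$, of rational numbers assigned to each of them — with Proposition~\ref{P:p}, which identifies the Frobenius residuals of $\overline{T}$ (the compactified moduli space of diagonal matrices) as a permutohedron paved by the Bia\l{}ynicki--Birula cells, whose centers are exactly the $T$-fixed points. So the whole argument reduces to showing that the $T$-fixed points appearing in the \cite{MMW} summation are precisely (or can be matched bijectively with) the $T$-fixed points lying on the Frobenius residuals.

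First I would recall the setup from the boundary section: the variety of complete quadrics $\GM$ is a wonderful $G$-equivariant compactification of $Q = G/\dot N$ with $G = SL_{n+1}$, and the closure $\overline{T}$ of the diagonal torus meets every $G$-orbit of $\GM$ (this is stated in the excerpt, following \cite{BB} and the general theory of wonderful compactifications). Next I would invoke the standard fact that every $T$-fixed point of $\GM$ lies in the closure $\overline{T}$ of the maximal torus — indeed a $T$-fixed point is in particular fixed by $T$, hence lies in the (complete) union $\bigcup_{s\in\mathbb{S}_{n+1}} s\cA = \overline{T}$ exhibited in the proof of Proposition~\ref{P:p}. Combined with the fact that $Q$ itself (the open Frobenius manifold of nonsingular diagonal quadrics, away from the identity locus) contains no $T$-fixed point other than trivial ones, every $T$-fixed point relevant to the \cite{MMW} formula actually sits in $\overline{T}\setminus T$, which is exactly the set of Frobenius residuals of $\overline{T}$ by Definition~\ref{D:FR}.

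Then I would assemble the count. By \cite{MMW}, $\mathrm{MLdeg}(\cL) = \sum_{y} r_y$ where $y$ ranges over $T$-fixed points of $\GM$ and $r_y\in\mathbb{Q}$; by the previous paragraph each such $y$ lies on the Frobenius residuals, and by Proposition~\ref{P:p} these $T$-fixed points are exactly the vertices of the permutohedron indexing the BB cells of the Frobenius residuals. Finally, unused fixed points contribute $r_y = 0$, so the sum may be taken over precisely the $T$-fixed points lying on the Frobenius residuals, which is the assertion.

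The main obstacle is the second step: making rigorous that \emph{all} $T$-fixed points of $\GM$ that carry nonzero weight in the \cite{MMW} formula lie in $\overline{T}\setminus T$ rather than possibly in the interior locus or on components not reached by the torus closure. This requires knowing that the torus-fixed locus of the wonderful compactification is entirely contained in the closure of the maximal torus $\overline{T}$ — true here because $\overline{T}$ is itself a complete toric variety whose fan is the Weyl-chamber decomposition and every $T$-fixed point of a $T$-variety containing a dense torus orbit of that toric type lies on that toric subvariety — and checking that the $\cW = \mathbb{S}_{n+1}$-action permutes these fixed points compatibly with the permutohedron vertex labeling of Proposition~\ref{P:p}. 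Once that containment and compatibility are in hand, the re-indexing of the \cite{MMW} sum over Frobenius-residual fixed points is immediate.
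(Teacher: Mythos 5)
Your proposal is correct and follows essentially the same route as the paper: cite the formula of \cite{MMW} expressing the ML degree as a sum of rational numbers over torus-fixed points of $\GM$, identify those fixed points with the permutohedron vertices of Proposition~\ref{P:p}, and observe that these lie on the Frobenius residuals. In fact your write-up supplies more justification than the paper's own three-sentence proof, which simply asserts that the torus-fixed points ``happen to lie'' on the Frobenius residuals; the containment of the $T$-fixed locus in $\overline{T}\setminus T$ that you flag as the main obstacle is exactly the step the paper leaves implicit.
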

\begin{proof}
By \cite{MMW}, the $ML$-degree can be expressed as a sum, over a finite number of torus fixed points on the variety of complete quadrics $\GM$, of rational numbers assigned to each of them.  The torus fixed points coincide with the vertices of a permutohedron. These torus fixed points happen to lie on the connected components of the Frobenius residuals. 
\end{proof}

 \end{document}